\newtheorem{theorem}{Theorem}[section]
\newtheorem{lemma}{Lemma}[section]
\newtheorem{proposition}{Proposition}[section]
\newtheorem{corollary}{Corollary}[section]
\newtheorem{fact}{Fact}[section]
\newtheorem{conjecture}{Conjecture}[section]
\theoremstyle{definition}
\newtheorem{definition}{Definition}[section]
\newtheorem{remark}{Remark}[section]
\begin{document}
\title{Anti-power Prefixes of the Thue-Morse Word}
\author{Colin Defant}
\address{University of Florida \\ 1400 Stadium Rd. \\ Gainesville, FL 32611 United States}
\email{cdefant@ufl.edu}

\begin{abstract} 
Recently, Fici, Restivo, Silva, and Zamboni defined a $k$-anti-power to be a word of the form $w_1w_2\cdots w_k$, where $w_1,w_2,\ldots,w_k$ are distinct words of the same length. They defined $AP(x,k)$ to be the set of all positive integers $m$ such that the prefix of length $km$ of the word $x$ is a $k$-anti-power. Let ${\bf t}$ denote the Thue-Morse word, and let $\mathcal F(k)=AP({\bf t},k)\cap(2\mathbb Z^+-1)$. For $k\geq 3$, $\gamma(k)=\min(\mathcal F(k))$ and $\Gamma(k)=\max((2\mathbb Z^+-1)\setminus\mathcal F(k))$ are well-defined odd positive integers. Fici et al. speculated that $\gamma(k)$ grows linearly in $k$. We prove that this is indeed the case by showing that $1/2\leq\displaystyle{\liminf_{k\to\infty}}(\gamma(k)/k)\leq 9/10$ and $1\leq\displaystyle{\limsup_{k\to\infty}}(\gamma(k)/k)\leq 3/2$. In addition, we prove that $\displaystyle{\liminf_{k\to\infty}}(\Gamma(k)/k)=3/2$ and $\displaystyle{\limsup_{k\to\infty}}(\Gamma(k)/k)=3$. 
\end{abstract} 

\maketitle

\bigskip

\noindent 2010 {\it Mathematics Subject Classification}: 05A05; 68R15.  

\noindent \emph{Keywords: Thue-Morse word; anti-power; infinite word}
 
\section{Introduction} 
A well-studied notion in combinatorics on words is that of a $k$-power; this is simply a word of the form $w^k$ for some word $w$. It is often interesting to ask questions related to whether or not certain types of words contain factors (also known as substrings) that are $k$-powers for some fixed $k$. For example, in 1912, Axel Thue \cite{Thue12} introduced an infinite binary word that does not contain any $3$-powers as factors (we say such a word is cube-free). This infinite word is now known as the Thue-Morse word; it is arguably the world's most famous (mathematical) word \cite{Allouche15, Allouche99, Bugeaud14, Brlek89, Dejean72}. 
\begin{definition}\label{Def1}
Let $\overline w$ denote the Boolean complement of a binary word $w$. Let $A_0=0$. For each nonnegative integer $n$, let $B_n=\overline {A_n}$ and $A_{n+1}=A_nB_n$. The \emph{Thue-Morse word} ${\bf t}$ is defined by \[{\bf t}=\lim_{n\to\infty}A_n.\] 
\end{definition}

Recently, Fici, Restivo, Silva, and Zamboni \cite{Fici16} introduced the very natural concept of a $k$-anti-power; this is a word of the form $w_1w_2\cdots w_k$, where $w_1,w_2,\ldots, w_k$ are distinct words \emph{of the same length}. For example, $001011$ is a $3$-anti-power, while $001010$ is not. In \cite{Fici16}, the authors prove that for all positive integers $k$ and $r$, there is a positive integer $N(k,r)$ such that all words of length at least $N(k,r)$ contain a factor that is either a $k$-power or an $r$-anti-power. They also define $AP(x,k)$ to be the set of all positive integers $m$ such that the prefix of length $km$ of the word $x$ is a $k$-anti-power. We will consider this set when $x={\bf t}$ is the Thue-Morse word. It turns out that $AP({\bf t},k)$ is nonempty for all positive integers $k$ \cite[Corollary 6]{Fici16}. It is not difficult to show that if $k$ and $m$ are positive integers, then $m\in AP({\bf t},k)$ if and only if $2m\in AP({\bf t},k)$. Therefore, the only interesting elements of $AP({\bf t},k)$ are those that are odd. For this reason, we make the following definition.  
\begin{definition}\label{Def2}
Let $\mathcal F(k)$ denote the set of odd positive integers $m$ such that the prefix of ${\bf t}$ of length $km$ is a $k$-anti-power. Let $\gamma(k)=\min(\mathcal F(k))$ and $\Gamma(k)=\sup((2\mathbb Z^+-1)\setminus\mathcal F(k))$.  
\end{definition}

\begin{remark}\label{Rem1}
It is immediate from Definition \ref{Def2} that $\mathcal F(1)\supseteq\mathcal F(2)\supseteq \mathcal F(3)\supseteq\cdots$. Therefore, $\gamma(1)\leq\gamma(2)\leq\gamma(3)\leq\cdots$ and $\Gamma(1)\leq\Gamma(2)\leq\Gamma(3)\leq\cdots$. 
\end{remark}

For convenience, we make the following definition. 

\begin{definition}\label{Def3}
If $m$ is a positive integer, let $\mathfrak K(m)$ denote the smallest positive integer $k$ such that the prefix of $\bf t$ of length $km$ is not a $k$-anti-power. 
\end{definition}

If $k\geq 3$, then $(2\mathbb Z^+-1)\setminus\mathcal F(k)$ is nonempty because it contains the number $3$ (the prefix of ${\bf t}$ of length $9$ is $011010011$, which is not a $3$-anti-power). We will show (Theorem \ref{Thm1}) that $(2\mathbb Z^+-1)\setminus\mathcal F(k)$ is finite so that $\Gamma(k)$ is a positive integer for each $k\geq 3$. For example, $(2\mathbb Z^+-1)\setminus\mathcal F(6)=\{1,3,9\}$. This means that $AP({\bf t},6)$ is the set of all postive integers of the form $2^\ell m$, where $\ell$ is a nonnegative integer and $m$ is an odd integer that is not $1$, $3$, or $9$.  

Fici et al. \cite{Fici16} give the first few values of the sequence $\gamma(k)$ and speculate that the sequence grows linearly in $k$. We will prove that this is indeed the case. In fact, it is the aim of this paper to prove the following: 
\begin{itemize}
\item $\displaystyle{\frac 12\leq\liminf_{k\to\infty}\frac{\gamma(k)}{k}\leq \frac{9}{10}}$
\item $\displaystyle{1\leq\limsup_{k\to\infty}\frac{\gamma(k)}{k}\leq\frac 32}$
\item $\displaystyle{\liminf_{k\to\infty}\frac{\Gamma(k)}{k}=\frac{3}{2}}$
\item $\displaystyle{\limsup_{k\to\infty}\frac{\Gamma(k)}{k}=3}$.
\end{itemize}

Despite these asymptotic results, there are many open problems arising from consideration of the sets $\mathcal F(k)$ (such as the cardinality of $(2\mathbb Z^+-1)\setminus\mathcal F(k)$) that we have not investigated; we discuss some of these problems at the end of the paper. 

\section{The Thue-Morse Word: Background and Notation}
Our primary focus is on the Thue-Morse word ${\bf t}$. In this brief section, we discuss some of the basic properties of this word that we will need when proving our asymptotic results. 

Let ${\bf t}_i$ denote the $i^\text{th}$ letter of ${\bf t}$ so that ${\bf t}={\bf t}_1{\bf t}_2{\bf t}_3\cdots$. The number ${\bf t}_i$ has the same parity as the number of $1$'s in the binary expansion of $i-1$. For any positive integers $\alpha,\beta$ with $\alpha\leq \beta$, define $\langle \alpha,\beta\rangle={\bf t}_\alpha{\bf t}_{\alpha+1}\cdots{\bf t}_\beta$. In his seminal 1912 paper, Thue proved that $\bf t$ is overlap-free \cite{Thue12}. This means that if $x$ and $y$ are finite words and $x$ is nonempty, then $xyxyx$ is not a factor of $\bf t$. Equivalently, if $a,b,n$ are positive integers satisfying $a<b\leq a+n$, then $\langle a,a+n\rangle\neq\langle b,b+n\rangle$. Note that this implies that ${\bf t}$ is cube-free.      

We write $\mathbb A^{\leq\omega}$ to denote the set of all words over an alphabet $\mathbb A$. Let $\mathcal W_1$ and $\mathcal W_2$ be sets of words. A \emph{morphism} $f\colon\mathcal W_1\to\mathcal W_2$ is a function satisfying $f(xy)=f(x)f(y)$ for all words $x,y\in\mathcal W_1$. A morphism is uniquely determined by where it sends letters. Let $\mu\colon\{0,1\}^{\leq\omega}\to\{01,10\}^{\leq\omega}$ denote the morphism defined by $\mu(0)=01$ and $\mu(1)=10$. Also, define a morphism $\sigma\colon\{01,10\}^{\leq\omega}\to\{0,1\}^{\leq\omega}$ by $\sigma(01)=0$ and $\sigma(10)=1$ so that $\sigma=\mu^{-1}$. The words ${\bf t}$ and $\overline{\bf t}$ are the unique one-sided infinite words over the alphabet $\{0,1\}$ that are fixed by $\mu$. Because $\mu({\bf t})={\bf t}$, we may view ${\bf t}$ as a word over the alphabet $\{01,10\}$. In particular, this means that ${\bf t}_{2i-1}\neq{\bf t}_{2i}$ for all positive integers $i$. In addition, if $\alpha$ and $\beta$ are nonnegative integers with $\alpha<\beta$, then $\langle 2\alpha+1,2\beta\rangle\in\{01,10\}^{\leq\omega}$. Recall the definitions of $A_n$ and $B_n$ from Definition \ref{Def1}. Observe that $A_n=\mu^n(0)$ and $B_n=\mu^n(1)$. Because $\mu^n({\bf t})={\bf t}$, the Thue-Morse word is actually a word over the alphabet $\{A_n,B_n\}$. This leads us to the following simple but useful fact. 
\begin{fact}\label{Fact1}
For any positive integers $n$ and $r$, $\langle 2^nr+1,2^n(r+1)\rangle=\mu^n(t_{r+1})$.  
\end{fact}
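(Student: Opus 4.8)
The plan is to use the fact, already recalled in this section, that $\mu^n({\bf t})={\bf t}$. Since $\mu$ is a morphism, so is $\mu^n$, and since $|\mu(0)|=|\mu(1)|=2$ we have $|\mu^n(a)|=2^n$ for each letter $a\in\{0,1\}$; in particular each $\mu^n({\bf t}_j)$ is either $A_n$ or $B_n$, a word of length $2^n$. Applying $\mu^n$ letter by letter to ${\bf t}={\bf t}_1{\bf t}_2{\bf t}_3\cdots$ and using $\mu^n({\bf t})={\bf t}$ yields the factorization
\[
{\bf t}=\mu^n({\bf t}_1)\,\mu^n({\bf t}_2)\,\mu^n({\bf t}_3)\cdots
\]
of ${\bf t}$ into consecutive blocks, each of length $2^n$.

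It then only remains to read off the relevant positions. In the displayed factorization, the block $\mu^n({\bf t}_j)$ occupies positions $(j-1)2^n+1$ through $j\,2^n$ of ${\bf t}$, so that $\langle(j-1)2^n+1,\,j\,2^n\rangle=\mu^n({\bf t}_j)$ for every positive integer $j$. Setting $j=r+1$ gives $\langle 2^nr+1,\,2^n(r+1)\rangle=\mu^n({\bf t}_{r+1})$, which is exactly the asserted identity.

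There is no real obstacle here; the content is entirely the observation that $\mu^n$ partitions ${\bf t}$ into blocks of length $2^n$ indexed by the letters of ${\bf t}$ itself. The only point deserving a word of care is the legitimacy of applying $\mu^n$ ``letter by letter'' to an infinite word, together with the resulting factorization; but this is precisely the sense in which ${\bf t}$ is fixed by $\mu$, so no extra work is needed. If one wished to avoid infinite words entirely, one could instead work with the finite prefix $A_m=\mu^m(0)=\mu^n(A_{m-n})$ for any $m>n$ with $2^{m-n}\geq r+1$, since $A_m$ is a prefix of ${\bf t}$ long enough to contain positions $2^nr+1,\dots,2^n(r+1)$ and $(A_{m-n})_{r+1}={\bf t}_{r+1}$; decomposing $\mu^n(A_{m-n})$ into its length-$2^n$ blocks then gives the same conclusion.
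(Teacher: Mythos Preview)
Your proof is correct and follows exactly the reasoning the paper intends: the paper does not give an explicit proof of this fact, but derives it immediately from the observation that $\mu^n({\bf t})={\bf t}$ makes ${\bf t}$ a word over $\{A_n,B_n\}$, which is precisely the block decomposition you spell out. Your additional remark about working with the finite prefix $A_m$ is a nice (though unnecessary) alternative.
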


\section{Asymptotics for $\Gamma(k)$} 

In this section, we prove that $\displaystyle{\liminf_{k\to\infty}}\:\Gamma(k)/k=3/2$ and $\displaystyle{\limsup_{k\to\infty}}\:\Gamma(k)/k=3$. The following proposition will prove very useful when we do so. 

\begin{proposition}\label{Prop1}
Let $m\geq 2$ be an integer, and let $\delta(m)=\lceil\log_2 (m/3)\rceil$. 

\begin{enumerate}[(i)]
\item If $y$ and $v$ are words such that $yvy$ is a factor of $\bf t$ and $\vert y\vert=m$, then $2^{\delta(m)}$ divides $\vert yv\vert$. 
\item There is a factor of $\bf t$ of the form $yvy$ such that $\vert y\vert=m$ and $2^{\delta(m)+1}$ does not divide $\vert yv\vert$.
\end{enumerate}  
\end{proposition}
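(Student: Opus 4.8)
The plan is to exploit the fact that the Thue-Morse word is a fixed point of $\mu$, so that occurrences of a long factor $yvy$ are tightly constrained by the block structure coming from the iterates $\mu^n$. Write $d=\delta(m)=\lceil\log_2(m/3)\rceil$, so that $2^{d-1}<m/3\le 2^d$, i.e. $3\cdot 2^{d-1}<m\le 3\cdot 2^d$. The key point is that a factor $y$ of length $m$ with $3\cdot 2^{d-1}<m$ is long enough that any occurrence of $y$ must ``see'' at least two full consecutive $\mu^{d}$-blocks of $\bf t$ (the blocks are the factors $\langle 2^{d}r+1,2^{d}(r+1)\rangle=\mu^{d}(t_{r+1})$ from Fact~\ref{Fact1}, which come in two types, $A_{d}$ and $B_{d}$, each of length $2^{d}$). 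Concretely, I would first prove the rigidity statement: if $y$ occurs starting at position $p$ and also at position $q$ in $\bf t$, and $|y|=m>3\cdot 2^{d-1}$, then $p\equiv q\pmod{2^{d}}$. This is exactly the type of synchronization property that underlies the proof that $\bf t$ is overlap-free; one derives it by noting that $y$ contains a window in which the $A_{d}/B_{d}$ block pattern forces alignment, using that $\bf t$ read as a word over $\{A_{d},B_{d}\}$ is itself (a shift of) the overlap-free Thue-Morse word. Once alignment mod $2^{d}$ is known, part (i) follows: in $yvy$ the two copies of $y$ start at positions differing by $|yv|$, hence $2^{d}\mid |yv|$.

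For part (ii) I need to \emph{construct} a factor $yvy$ of $\bf t$ with $|y|=m$ and $2^{d+1}\nmid|yv|$; equivalently, I want two occurrences of some length-$m$ factor whose starting positions differ by an odd multiple of $2^{d}$. The natural source of repeated factors in $\bf t$ is the block structure one level up: since $\bf t$ over the alphabet $\{A_{d+1},B_{d+1}\}$ equals $\bf t$ over $\{0,1\}$, and $A_{d+1}=A_{d}B_{d}$, $B_{d+1}=B_{d}A_{d}$, consecutive $\mu^{d}$-blocks of $\bf t$ realize all four patterns $A_{d}A_{d}$? no—$A_dA_d$ does not occur, but $A_{d}B_{d}$, $B_{d}A_{d}$, $B_{d}B_{d}$ do, and more relevantly the length-$(2\cdot 2^{d})$ factor $A_{d}B_{d}$ occurs at many positions, some of which differ by an odd multiple of $2^{d}$. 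Since $m\le 3\cdot 2^{d}< 4\cdot 2^{d}$, a suitable factor $y$ of length $m$ lies inside a window of three consecutive $\mu^{d}$-blocks; I would pick an explicit short pattern of blocks (e.g.\ locate two occurrences of the three-block pattern $A_{d}B_{d}A_{d}$, or $B_{d}A_{d}B_{d}$, whose left endpoints differ by $2^{d}$ times an odd number) by reading off a short enough prefix of $\bf t$ over $\{0,1\}$—this only requires checking that $\bf t$ contains two occurrences of some $3$-letter factor at an odd distance, which it does (e.g.\ $t_1t_2t_3=011$ and $t_4t_5t_6=010$ differ—bad example; but $t_2t_3t_4=110$ and $t_5t_6t_7=100$, distance $3$, or more simply any two occurrences of a single letter at odd distance and then pad—I would just exhibit a concrete pair). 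Then set $y=\langle \alpha,\alpha+m-1\rangle$ for the left endpoint $\alpha$ of the first occurrence and let $v$ be the intervening word; $|yv|$ is the distance between the two left endpoints, which is $2^{d}\cdot(\text{odd})$, so $2^{d+1}\nmid|yv|$, and we must also check $yvy$ is genuinely a factor, i.e.\ that the two copies of $y$ don't overlap, which holds because the distance is at least $2^{d}\ge m/3\cdot\ldots$—here I would choose the occurrences far enough apart (distance $\ge m$) to be safe, which costs nothing since I can take any odd multiple of $2^d$ that is $\ge m$.

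The main obstacle is proving the rigidity claim in part (i) cleanly, namely that long factors of $\bf t$ can only reoccur at positions congruent mod $2^{d}$ when $|y|>3\cdot 2^{d-1}$; the constant $3$ (hence the $\log_2(m/3)$ in $\delta$) is exactly the threshold at which a length-$m$ window is forced to contain a ``desynchronizing'' sub-pattern unless the two occurrences are aligned, and pinning down this threshold is the delicate combinatorial core. I expect to handle it by a descent argument: if $y$ occurs at positions $p<q$ with $p\not\equiv q\pmod 2$, then looking at $\bf t$ as a word over $\{01,10\}$ and using that $|y|$ is large produces an overlap $xyxyx$ in $\bf t$ (contradiction with overlap-freeness), which forces $p\equiv q\pmod 2$; then one applies $\sigma$ to pass to a shorter factor in the (shifted) Thue-Morse word and iterates $d$ times, each iteration halving lengths, the hypothesis $m>3\cdot 2^{d-1}$ ensuring the factor stays long enough ($>3$) for the overlap argument to apply at every stage. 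The bookkeeping of how the length bound propagates under $\sigma$ is where care is needed, but it is routine once the base case (factors of length $\ge 4$, say, cannot reoccur at odd distance—a finite check plus overlap-freeness) is in hand.
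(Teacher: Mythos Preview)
Your plan for part~(i) is exactly the paper's argument: one first shows that two occurrences of a length-$m\ge 4$ factor must lie at even distance (the paper derives an overlap $a\bar a a\bar a a$ from the $\{01,10\}$ block structure, which is the ``overlap $xyxyx$'' you gesture at), and then applies $\sigma$ to descend; the paper organises the parity bookkeeping into four cases on the parities of $|x|$ and $m$, but this is precisely the ``routine bookkeeping'' you anticipate.

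For part~(ii) you take a slightly different route. The paper argues by induction on $m$: base cases $m=2,3$ (using the explicit factors $01101$ and $101101$), then for even $m$ apply $\mu$ to the construction for $m/2$, and for odd $m$ truncate the construction for $m+1$. Your proposal is to write down the construction in one shot by taking $y$ to be the length-$m$ prefix of $B_dA_dB_d=\mu^d(101)$ and using that $101101$ is a factor of $\mathbf t$, so that $\mu^d(101101)$ contains $yvy$ with $|yv|=3\cdot 2^d$. This is just the unrolling of the paper's induction (with the odd/even case distinction absorbed into ``take a prefix''), so the two are essentially the same; your version is a bit more uniform, while the paper's inductive phrasing makes the length bookkeeping completely mechanical. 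Your search for ``two occurrences of some $3$-letter factor at odd distance $\ge 3$'' is resolved immediately by the paper's base case $101101$, which gives $101$ at distance $3$.
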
\vspace{-.5cm}
\begin{proof}
We first prove $(ii)$ by induction on $m$. If $m=2$, we may simply set $y=01$ and $v=1$. If $m=3$, we may set $y=101$ and $v=\varepsilon$ (the empty word). Now, assume $m\geq 4$. First, suppose $m$ is even. By induction, we can find a factor of $\bf t$ of the form $yvy$ such that $\vert y\vert=m/2$ and such that $2^{\delta(m/2)+1}$ does not divide $\vert yv\vert$. Note that $\mu(y)\mu(v)\mu(y)$ is a factor of $\bf t$ and that $2^{\delta(m/2)+2}$ does not divide $2\vert yv\vert=\vert\mu(y)\mu(v)\vert$. Since $\delta(m/2)+2=\delta(m)+1$, we are done. Now, suppose $m$ is odd. Because $m+1$ is even, we may use the above argument to find a factor $y'v'y'$ of $\bf t$ with $\vert y'\vert=m+1$ such that $2^{\delta(m+1)+1}$ does not divide $\vert y'v'\vert$. It is easy to show that $\delta(m)=\delta(m+1)$ because $m>3$ is odd. This means that $2^{\delta(m)+1}$ does not divide $\vert y'v'\vert$. Let $a$ be the last letter of $y'$, and write $y'=y''a$. Put $v''=av'$. Then $y''v''y''$ is a factor of $\bf t$ with $\vert y''\vert=m$ and $\vert y''v''\vert=\vert y'v'\vert$. This completes the inductive step. 

We now prove $(i)$ by induction on $m$. If $m\leq 3$, the proof is trivial because $\delta(2)=\delta(3)=0$. Therefore, assume $m\geq 4$. Assume that $yvy$ is a factor of $\bf t$ and $\vert y\vert=m$. Let us write ${\bf t}=xyvyz$.    

Suppose by way of contradiction that $\vert vy\vert$ is odd. Then $\vert xy\vert$ and $\vert xyvy\vert$ have different parities. Write $y=y_1a$, where $a$ is the last letter of $y$. Either $xy$ or $xyvy$ is an even-length prefix of $\bf t$, and is therefore a word in $\{01,10\}^{\leq\omega}$. It follows that the second-to-last letter of $y$ is $\overline a$, so we may write $y_1=y_2\overline a$. We now observe that one of the words $xy_1$ and $xyvy_1$ is an even-length prefix of $\bf t$, so the same reasoning as before tells us that the second-to-last letter in $y_1$ is $a$. Therefore, $y=y_3a\overline a a$ for some word $y_3$. We can continue in this fashion to see that $a\overline a a\overline a a$ is a suffix of $vy$. This is impossible since $\bf t$ is overlap-free. Hence, $\vert vy\vert$ must be even. We now consider four cases corresponding to the possible parities of $\vert x\vert$ and $m$. 

\noindent {\bf Case 1:} $\vert x\vert$ and $\vert y\vert=m$ are both even. We just showed $\vert vy\vert$ is even, so all of the words $x,xy,xyv,xyvy$ are even-length prefixes of $\bf t$. This means that $x,y,v,z\in\{01,10\}^{\leq\omega}$, so ${\bf t}=\sigma(x)\sigma(y)\sigma(v)\sigma(y)\sigma(z)$. By induction, we see that $2^{\delta(\vert\sigma(y)\vert)}$ divides $\vert\sigma(y)\sigma(v)\vert$. Because $\delta(\vert \sigma(y)\vert)=\delta(m/2)=\delta(m)-1$ and $\vert\sigma(y)\sigma(v)\vert=\vert yv\vert/2$, it follows that $2^{\delta(m)}$ divides $\vert yv\vert$.   

\noindent {\bf Case 2:} $\vert x\vert$ is odd and $m$ is even. As in the previous case, $\vert v\vert$ must be even. Let $a,b,c$ be the last letters of $y,v,x$, respectively. Write $y=y_0a$, $v=v_0b$, $x=x_0c$. We have ${\bf t}=x_0cy_0av_0by_0az$. Note that $\vert x_0\vert$, $\vert cy_0\vert$, $\vert av_0\vert$, and $\vert by_0\vert$ are all even. In particular, $cy_0$ and $by_0$ are both in $\{01,10\}^{\leq\omega}$. As a consequence, $b=c$. Setting $x'=x_0$, $y'=by_0$, $v'=av_0$, $z'=az$, we find that ${\bf t}=x'y'v'y'z'$. We are now in the same situation as in the previous case because $\vert x'\vert$ is even and $\vert y'\vert=m$. Consequently, $2^{\delta(m)}$ divides $\vert y'v'\vert=\vert yv\vert$. 

\noindent {\bf Case 3:} $m$ is odd and $\vert x\vert$ is even. Let $a$ be the last letter of $y$. Both $v$ and $z$ start with the letter $\overline a$, so we may write $v=\overline av_1$ and $z=\overline az_1$. Put $x_1=x$ and $y_1=y\overline a$. We have ${\bf t}=x_1y_1v_1y_1z_1$. Because $\vert x_1\vert$ and $\vert y_1\vert=m+1$ are both even, we know from the first case that $2^{\delta(m+1)}$ divides $\vert y_1v_1\vert=\vert yv\vert$. Now, simply observe that $\delta(m)=\delta(m+1)$ because $m>3$ is odd.

\noindent {\bf Case 4:} $m$ and $\vert x\vert$ are both odd. Let $d$ be the first letter of $y$. Both $x$ and $v$ end in the letter $\overline d$, so we may write $x=x_2\overline d$ and $v=v_2\overline d$. Let $y_2=\overline dy$ and $z_2=z$. Then ${\bf t}=x_2y_2v_2y_2z_2$. Because $\vert x_2\vert$ and $\vert y_2\vert=m+1$ are both even, we know that $2^{\delta(m+1)}$ divides $\vert y_2v_2\vert=\vert yv\vert$. Again, $\delta(m)=\delta(m+1)$.
\end{proof}
\begin{corollary}\label{Cor1}
Let $m$ be a positive integer, and let $\delta(m)=\lceil\log_2(m/3)\rceil$. If $k\geq 3$ and $m\in(2\mathbb Z^+-1)\setminus\mathcal F(k)$, then $k-1\geq 2^{\delta(m)}$. 
\end{corollary}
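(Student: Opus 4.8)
The plan is to translate the failure of the length-$km$ prefix of ${\bf t}$ to be a $k$-anti-power into the existence of a factor of ${\bf t}$ of the shape $yvy$ to which Proposition \ref{Prop1}$(i)$ applies. Suppose $k\geq 3$ and $m\in(2\mathbb Z^+-1)\setminus\mathcal F(k)$. Write the length-$km$ prefix of ${\bf t}$ as $w_1w_2\cdots w_k$ with $\lvert w_i\rvert=m$, where $w_i=\langle(i-1)m+1,im\rangle$. Since this prefix is not a $k$-anti-power, there are indices $1\leq i<j\leq k$ with $w_i=w_j$. Set $y=w_i$ and $v=\langle im+1,(j-1)m\rangle$ (with $v=\varepsilon$ if $j=i+1$). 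Then $yvy=\langle(i-1)m+1,jm\rangle$ is a factor of ${\bf t}$, $\lvert y\rvert=m$, and $\lvert yv\rvert=(j-i)m$.

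Next I would invoke Proposition \ref{Prop1}$(i)$, which requires $m\geq 2$; for odd $m$ this is exactly the assumption $m\geq 3$ (the trivial case $m=1$ is handled separately below, and $m=2$ does not occur). The proposition yields that $2^{\delta(m)}$ divides $\lvert yv\rvert=(j-i)m$. Because $m$ is odd, $\gcd(2^{\delta(m)},m)=1$, so $2^{\delta(m)}$ divides $j-i$. Since $1\leq j-i$, this forces $j-i\geq 2^{\delta(m)}$, and as $j-i\leq k-1$ we conclude $k-1\geq 2^{\delta(m)}$, as desired.

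It remains to dispose of $m=1$. Here $\delta(1)=\lceil\log_2(1/3)\rceil=-1$, so $2^{\delta(1)}=1/2\leq 2\leq k-1$ and the inequality holds trivially. (Alternatively, one observes that $1\in(2\mathbb Z^+-1)\setminus\mathcal F(k)$ for every $k\geq 3$ anyway, since the length-$k$ prefix of ${\bf t}$ uses only two distinct letters and hence cannot be a $k$-anti-power.)

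I do not anticipate a genuine obstacle here: the statement is essentially immediate from Proposition \ref{Prop1}$(i)$ together with the parity of $m$. The one place to be slightly careful is the boundary case $j=i+1$, where $v$ is empty and $yvy$ degenerates to the square $yy$; this is still a legitimate instance of ``$yvy$ is a factor of ${\bf t}$'' (the proof of Proposition \ref{Prop1} itself permits $v=\varepsilon$, as in its base case $m=3$), so no separate treatment is needed. The other minor point is simply to remember that Proposition \ref{Prop1}$(i)$ is stated only for $m\geq 2$, which is why the case $m=1$ is peeled off at the outset.
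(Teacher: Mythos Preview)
Your argument is correct and is essentially the same as the paper's: both translate the failure of anti-power into a factor $yvy$ with $\lvert y\rvert=m$, apply Proposition~\ref{Prop1}(i) to get $2^{\delta(m)}\mid (j-i)m$, and use the oddness of $m$ to strip the factor $m$. Your version is in fact slightly more careful than the paper's, since you peel off the case $m=1$ (Proposition~\ref{Prop1} is stated only for $m\ge 2$) and remark on the $v=\varepsilon$ boundary case, both of which the paper leaves implicit.
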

\begin{proof}
There exist integers $n_1$ and $n_2$ with $0\leq n_1<n_2\leq k-1$ such that $\langle n_1m+1,(n_1+1)m\rangle=\langle n_2m+1,(n_2+1)m\rangle$. Let $y=\langle n_1m+1,(n_1+1)m\rangle$ and $v=\langle (n_1+1)m+1,n_2m\rangle$. The word $yvy$ is a factor of $\bf t$, and $\vert y\vert=m$. According to Proposition \ref{Prop1}, $2^{\delta(m)}$ divides $\vert yv\vert=(n_2-n_1)m$, where $\delta(m)=\lceil\log_2(m/3)\rceil$. Since $m$ is odd, $2^{\delta(m)}$ divides $n_2-n_1$. This shows that $k-1\geq n_2\geq n_2-n_1\geq 2^{\delta(m)}$. 
\end{proof}

The following lemma is somewhat technical, but it will be useful for constructing specific pairs of identical factors of the Thue-Morse word. These specific pairs of factors will provide us with odd positive integers $m$ for which $\mathfrak K(m)$ is relatively small. We will then make use of the fact, which follows immediately from Definitions \ref{Def2} and \ref{Def3}, that $\Gamma(k)\geq m$ whenever $k\geq \mathfrak K(m)$.  

\begin{lemma}\label{Lem1}
Suppose $r,m,\ell,h,p,q$ are nonnegative integers satisfying the following conditions:  
\begin{itemize}
\item $h<2^{\ell-2}$
\item $rm=p\cdot 2^{\ell+1}+2^{\ell-1}+h$
\item $(r+1)m\leq p\cdot 2^{\ell+1}+5\cdot 2^{\ell-2}$ 
\item $(r+2^{\ell-2})m=q\cdot 2^{\ell+1}+3\cdot2^{\ell-2}+h$ 
\item ${\bf t}_{p+1}\neq{\bf t}_{q+1}$.
\end{itemize} 
Then $\langle rm+1,(r+1)m\rangle=\langle (r+2^{\ell-2})m+1,(r+2^{\ell-2}+1)m\rangle$, and $\mathfrak K(m)\leq r+2^{\ell-2}+1$.  
\end{lemma}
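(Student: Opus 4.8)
The conclusion has two parts: first the equality of the two length-$m$ factors $\langle rm+1,(r+1)m\rangle = \langle (r+2^{\ell-2})m+1,(r+2^{\ell-2}+1)m\rangle$, and second the bound $\mathfrak K(m)\le r+2^{\ell-2}+1$. The second part is immediate once the first is established: the two blocks occur as the $(r+1)$-st and $(r+2^{\ell-2}+1)$-st blocks of length $m$ in the prefix of $\bf t$ of length $(r+2^{\ell-2}+1)m$, so that prefix is not an $(r+2^{\ell-2}+1)$-anti-power, whence by Definition \ref{Def3} we get $\mathfrak K(m)\le r+2^{\ell-2}+1$. So the whole task reduces to proving the factor equality.

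To prove the equality, I would set $s=2^{\ell-2}$ and compare the two windows $\langle rm+1,(r+1)m\rangle$ and $\langle (r+s)m+1,(r+s+1)m\rangle$ letter by letter — or rather, block by block using Fact \ref{Fact1}. The idea is that both windows, once we look at them at the scale of $2^{\ell+1}$, live "close to" the same position modulo the self-similar structure of $\bf t$. Concretely, write $rm = p\cdot 2^{\ell+1}+2^{\ell-1}+h$ and $(r+s)m = q\cdot 2^{\ell+1}+3\cdot 2^{\ell-2}+h$; note $3\cdot 2^{\ell-2}-2^{\ell-1}=2^{\ell-2}$, so the two windows have the same offset $h$ within a block of length $2^{\ell-2}$ but sit in different $2^{\ell+1}$-blocks (indexed by $p$ versus $q$) and at different sub-offsets ($2^{\ell-1}$ versus $3\cdot 2^{\ell-2}$) within those blocks. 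The hypothesis $(r+1)m\le p\cdot 2^{\ell+1}+5\cdot 2^{\ell-2}$ guarantees that the first window, which starts at offset $2^{\ell-1}=2\cdot 2^{\ell-2}$ inside block $p$, ends before offset $5\cdot 2^{\ell-2}$, i.e. it is contained in the stretch of $\bf t$ occupying positions with "high bits" given by $p$ and the next two bits after that equal to $1,0$ — since $2^{\ell-1}$ through $5\cdot2^{\ell-2}$ corresponds to the third and fourth quarters (pattern $10$) of the $2^{\ell}$-block, then spilling a little into the next. Meanwhile the second window starts at offset $3\cdot 2^{\ell-2}$ inside block $q$ — the fourth quarter — and since $h<2^{\ell-2}$ it stays inside that quarter plus a bit. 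Using Fact \ref{Fact1} with $n=\ell-2$ and the fact that $\mu^{\ell-2}$ applied to a letter of $\bf t$ reproduces $A_{\ell-2}$ or $B_{\ell-2}$, one sees each window is a concatenation of truncated/shifted copies of $A_{\ell-2},B_{\ell-2}$, and the sequence of which-one-it-is is governed precisely by the relevant bits of $\bf t$ near position $2p$ (resp. $2q$). The condition ${\bf t}_{p+1}\ne {\bf t}_{q+1}$ is exactly what is needed to realign these two parity-reversed patterns so that the windows agree.

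The honest way to carry this out is an induction on $\ell$ mirroring the structure of Proposition \ref{Prop1}: peel off applications of $\mu$ (or $\sigma$) to reduce $\ell$, tracking how the six parameters $r,m,\ell,h,p,q$ transform, with base cases at small $\ell$ (say $\ell\le 3$, where $h<2^{\ell-2}$ forces $h$ and the configuration into a short finite list that can be checked directly). At each inductive step one verifies that the five bulleted hypotheses are preserved — or transformed into the analogous hypotheses for the smaller instance — the key transformation being that halving everything sends $2^{\ell+1}\mapsto 2^\ell$, $h\mapsto\lfloor h/2\rfloor$ or similar, and replaces ${\bf t}_{p+1}\ne{\bf t}_{q+1}$ by the corresponding statement one level down via $\sigma$.

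**Expected main obstacle.** The delicate point is bookkeeping the interaction between the additive offsets ($2^{\ell-1}$, $3\cdot 2^{\ell-2}$, $5\cdot 2^{\ell-2}$, $h$) and the binary/self-similar structure — making sure that the "spillover" of the first window past position $4\cdot 2^{\ell-2}$ (i.e. into the next $2^\ell$-block) is matched correctly on the other side, and that the parity condition ${\bf t}_{p+1}\ne{\bf t}_{q+1}$ plus the overlap-free/complementation properties of $\bf t$ genuinely force equality rather than near-equality. Equivalently, the hard part is choosing the right induction statement so that all five hypotheses descend cleanly; once the correct reformulation is found, each step should be a short computation with $\mu$, $\sigma$, and Fact \ref{Fact1}.
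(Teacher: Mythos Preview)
Your reduction of the second conclusion to the first is correct, and your informal analysis — that each window sits inside a run of $2^{\ell-2}$-blocks whose $A/B$ pattern is determined by bits of ${\bf t}$ near $p$ and $q$, and that ${\bf t}_{p+1}\neq{\bf t}_{q+1}$ is what makes the two patterns line up — is exactly the mechanism at work. But you then propose to formalize this by an induction on $\ell$, peeling off applications of $\mu$/$\sigma$. That is both unnecessary and not obviously viable: the windows $\langle rm+1,(r+1)m\rangle$ need not start at even positions (indeed $m$ is typically odd in the applications), so ``halving everything'' via $\sigma$ does not send the configuration to a smaller instance of the same lemma in any clean way. The bookkeeping you flag as the main obstacle is precisely what would not descend well.

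The paper's proof is direct, with no induction, and in fact your middle paragraph already contains almost all of it. Write out the eight $2^{\ell-2}$-blocks of $A_{\ell+1}$ and $B_{\ell+1}$:
\[
A_{\ell+1}=A_{\ell-2}B_{\ell-2}B_{\ell-2}A_{\ell-2}B_{\ell-2}A_{\ell-2}A_{\ell-2}B_{\ell-2},\qquad
B_{\ell+1}=B_{\ell-2}A_{\ell-2}A_{\ell-2}B_{\ell-2}A_{\ell-2}B_{\ell-2}B_{\ell-2}A_{\ell-2}.
\]
Blocks $3$--$5$ of $A_{\ell+1}$ and blocks $4$--$6$ of $B_{\ell+1}$ are \emph{the same word} $B_{\ell-2}A_{\ell-2}B_{\ell-2}$. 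Assuming (by symmetry) ${\bf t}_{p+1}=0$ and ${\bf t}_{q+1}=1$, Fact~\ref{Fact1} gives $\langle p\cdot 2^{\ell+1}+1,(p+1)2^{\ell+1}\rangle=A_{\ell+1}$ and $\langle q\cdot 2^{\ell+1}+1,(q+1)2^{\ell+1}\rangle=B_{\ell+1}$. Conditions (ii), (iii), and $h<2^{\ell-2}$ say $u=\langle rm+1,(r+1)m\rangle$ starts at offset $h$ inside blocks $3$--$5$ of that copy of $A_{\ell+1}$ and ends before block $5$ does. Condition (iv) together with $m\le 3\cdot 2^{\ell-2}-h$ (which follows from (ii) and (iii)) says $v$ starts at the same offset $h$ inside blocks $4$--$6$ of that copy of $B_{\ell+1}$ and fits there. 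Since both are the length-$m$ subword of $B_{\ell-2}A_{\ell-2}B_{\ell-2}$ starting at position $h+1$, they are equal. That is the entire argument; the ``spillover'' and ``realignment'' issues you anticipated dissolve once the common $B_{\ell-2}A_{\ell-2}B_{\ell-2}$ pattern is identified explicitly.
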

\begin{proof}
Let $u=\langle rm+1,(r+1)m\rangle$ and $v=\langle (r+2^{\ell-2})m+1,(r+2^{\ell-2}+1)m\rangle$. Let us assume ${\bf t}_{p+1}=0$; a similar argument holds if we assume instead that ${\bf t}_{p+1}=1$. According to Fact \ref{Fact1}, \[\langle p\cdot 2^{\ell+1}+1,(p+1)2^{\ell+1}\rangle=A_{\ell+1}=A_{\ell-2}B_{\ell-2}B_{\ell-2}A_{\ell-2}B_{\ell-2}A_{\ell-2}A_{\ell-2}B_{\ell-2}.\] We may now use the first three conditions to see that $B_{\ell-2}A_{\ell-2}B_{\ell-2}=xuy$ for some words $x$ and $y$ such that $\vert x\vert=h$ and $\vert y\vert=p\cdot 2^{\ell+1}+5\cdot2^{\ell-2}-(r+1)m$ (see Figure \ref{Fig1}).

We know from the last condition that ${\bf t}_{q+1}=1$, so \[\langle q\cdot 2^{\ell+1}+1,(q+1)2^{\ell+1}\rangle=B_{\ell+1}=B_{\ell-2}A_{\ell-2}A_{\ell-2}B_{\ell-2}A_{\ell-2}B_{\ell-2}B_{\ell-2}A_{\ell-2}.\] The fourth condition tells us that $B_{\ell-2}A_{\ell-2}B_{\ell-2}=x'vy'$ for some words $x'$ and $y'$ with $\vert x'\vert=h$. We have shown that $xuy=x'vy'$, where $\vert x\vert=\vert x'\vert$ and $\vert u\vert=\vert v\vert$. Hence, $u=v$. It follows that the prefix of $\bf t$ of length $(r+2^{\ell-2}+1)m$ is not a $(r+2^{\ell-2}+1)$-anti-power, so $\mathfrak K(m)\leq r+2^{\ell-2}+1$ by definition. 
\end{proof}

\begin{figure}[t]

\begin{tikzpicture}
\draw (0,0) -- (0,2.4) -- (16.5,2.4) -- (16.5,0) -- (0,0);
\draw (0,0.8) -- (16.5,0.8);
\draw (0,1.6) -- (16.5,1.6);
\draw (5.98,0.8) -- (5.98,2.4); 
\draw (10.52,0.8) -- (10.52,2.4); 
\draw (0.7475,0.8) -- (0.7475,1.6);
\draw (1.495,0) -- (1.495,1.6);
\draw (2.2425,0.8) -- (2.2425,1.6);
\draw (2.99,0.8) -- (2.99,1.6);
\draw (3.7375,0) -- (3.7375,1.6);
\draw (4.485,0.8) -- (4.485,1.6);
\draw (5.2325,0.8) -- (5.2325,1.6);
\draw (11.2675,0.8) -- (11.2675,1.6);
\draw (12.015,0.8) -- (12.015,1.6);
\draw (12.7625,0) -- (12.7625,1.6);
\draw (13.51,0.8) -- (13.51,1.6);
\draw (14.2575,0.8) -- (14.2575,1.6);
\draw (15.005,0) -- (15.005,1.6);
\draw (15.7525,0.8) -- (15.7525,1.6);
\draw (2,0) -- (2,0.8);
\draw (3.1,0) -- (3.1,0.8);
\draw (13.2675,0) -- (13.2675,0.8);
\draw (14.3675,0) -- (14.3675,0.8); 
\node at (2.95,2){$A_{\ell+1}$};
\node at (13.55,2){$B_{\ell+1}$}; 
\node at (0.37375,1.2){\small $A_{\ell-2}$};
\node at (1.12125,1.2){\small $B_{\ell-2}$};
\node at (1.86875,1.2){\small $B_{\ell-2}$};
\node at (2.61625,1.2){\small $A_{\ell-2}$};
\node at (3.36375,1.2){\small $B_{\ell-2}$};
\node at (4.11125,1.2){\small $A_{\ell-2}$};
\node at (4.85875,1.2){\small $A_{\ell-2}$};
\node at (5.60625,1.2){\small $B_{\ell-2}$};
\node at (10.89375,1.2){\small $B_{\ell-2}$};
\node at (11.64125,1.2){\small $A_{\ell-2}$};
\node at (12.38875,1.2){\small $A_{\ell-2}$};
\node at (13.13625,1.2){\small $B_{\ell-2}$};
\node at (13.88375,1.2){\small $A_{\ell-2}$};
\node at (14.63125,1.2){\small $B_{\ell-2}$};
\node at (15.37875,1.2){\small $B_{\ell-2}$};
\node at (16.12625,1.2){\small $A_{\ell-2}$};

\node at (1.735,0.4){$x$};
\node at (2.55,0.4){$u$};
\node at (3.43,0.4){$y$};

\node at (13.03,0.44){$x'$};
\node at (13.82,0.4){$v$};
\node at (14.73,0.44){$y'$};

\end{tikzpicture}\captionof{figure}{An illustration of the proof of Lemma \ref{Lem1}.} \label{Fig1}
\end{figure}

We may now use Lemma \ref{Lem1} and Proposition \ref{Prop1} to prove that $\displaystyle{\limsup_{k\to\infty}}\:\Gamma(k)/k=3$. Recall that if $k\geq 3$, then $\Gamma(k)\geq 3$ because $3\in(2\mathbb Z^+-1)\setminus\mathcal F(k)$. A particular consequence of the following theorem is that $(2\mathbb Z^+-1)\setminus\mathcal F(k)$ is finite. It follows that if $k\geq 3$, then $\Gamma(k)$ is an odd positive integer.  

\begin{theorem}\label{Thm1} 
Let $\Gamma(k)$ be as in Definition \ref{Def2}. For all integers $k\geq 3$, we have $\Gamma(k)\leq 3k-4$. Furthermore, $\displaystyle{\limsup_{k\to\infty}\frac{\Gamma(k)}{k}}=3$. 
\end{theorem}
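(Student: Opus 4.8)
The plan is to prove the two parts of Theorem~\ref{Thm1} separately, using Corollary~\ref{Cor1} for the upper bound $\Gamma(k)\le 3k-4$ and Lemma~\ref{Lem1} for the matching asymptotic lower bound $\limsup_{k\to\infty}\Gamma(k)/k=3$.

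\textbf{Upper bound.} First I would show $\Gamma(k)\le 3k-4$ for all $k\ge 3$. Suppose $m$ is an odd positive integer with $m\in(2\mathbb Z^+-1)\setminus\mathcal F(k)$; I want to bound $m$ from above in terms of $k$. By Corollary~\ref{Cor1}, $k-1\ge 2^{\delta(m)}$ where $\delta(m)=\lceil\log_2(m/3)\rceil$. Unwinding the ceiling, $2^{\delta(m)}\ge m/3$, so $k-1\ge m/3$, i.e.\ $m\le 3(k-1)=3k-3$. Since $m$ and $3k-3$ have the same parity issue only when $k$ is even, a tiny parity refinement is needed: $m$ is odd, and if $3k-3$ is odd (i.e.\ $k$ even) then $m\le 3k-3$ already suffices, but we want $3k-4$; when $k$ is even $3k-3$ is odd so this is not immediate. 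The cleaner route is to observe that $2^{\delta(m)}$ is a power of $2$, hence even once $\delta(m)\ge 1$, so $k-1\ge 2^{\delta(m)}$ forces $k-1$ to be at least an even number unless $\delta(m)=0$ (i.e.\ $m\le 3$); combining $m\le 3\cdot 2^{\delta(m)}$ with $2^{\delta(m)}\le k-1$ and chasing the inequality through integrality (using that $m$ odd and $2^{\delta(m)}\mid$ nothing forces $m<3\cdot2^{\delta(m)}$ when $\delta(m)\ge1$, since $3\cdot 2^{\delta(m)}$ is even) gives $m\le 3\cdot 2^{\delta(m)}-1\le 3(k-1)-1=3k-4$. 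The cases $m\in\{1,3\}$ are trivially $\le 3k-4$ for $k\ge3$. This establishes $\Gamma(k)\le 3k-4$, hence $\limsup_{k\to\infty}\Gamma(k)/k\le 3$.

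\textbf{Lower bound.} For the matching lower bound I would produce, for a suitable infinite sequence of values, an odd $m$ with $\mathfrak K(m)$ small relative to $m$ — specifically an $m$ close to $3\cdot 2^{\ell-2}$ with $\mathfrak K(m)\le r+2^{\ell-2}+1$ where $r\approx 2\cdot 2^{\ell-2}$ — so that $\Gamma(k)\ge m\approx 3(k-1)$ for $k=\mathfrak K(m)$. Concretely I would pick $\ell$ large, set $m$ to be an odd integer very near $3\cdot 2^{\ell-2}$ (say $m=3\cdot 2^{\ell-2}-1$ or $3\cdot2^{\ell-2}\pm\text{something small and odd}$), and then choose $r$, $h$, $p$, $q$ to satisfy the five bullet conditions of Lemma~\ref{Lem1}. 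The equations $rm=p\cdot 2^{\ell+1}+2^{\ell-1}+h$ and $(r+2^{\ell-2})m=q\cdot2^{\ell+1}+3\cdot2^{\ell-2}+h$ subtract to give $2^{\ell-2}m=(q-p)2^{\ell+1}+2^{\ell-2}$, i.e.\ $m=(q-p)\cdot 8+1$, which pins down the residue of $m$ modulo $8$ and shows $q-p=(m-1)/8$; with $m\approx 3\cdot2^{\ell-2}$ this is consistent. Then $r$ is determined modulo small powers of $2$ by the first equation, $h$ is the residue $rm\bmod 2^{\ell+1}$ lying in $[0,2^{\ell-2})$ (the window conditions force this), and the inequality $(r+1)m\le p\cdot2^{\ell+1}+5\cdot2^{\ell-2}$ is a mild constraint that holds for the natural choice. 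The last condition ${\bf t}_{p+1}\ne{\bf t}_{q+1}$ is arranged by choosing among the admissible lifts of $p$ (there is freedom to add multiples of a large power of $2$ to $p$ and $q$ simultaneously, and the Thue-Morse word is not eventually constant, so we can always make the two letters differ). With such $m$ and $k=r+2^{\ell-2}+1$, we get $m/k\to 3$ as $\ell\to\infty$ since $m\sim 3\cdot2^{\ell-2}$ and $k=r+2^{\ell-2}+1\sim r\sim\tfrac{2^{\ell-2}\cdot 3}{m}\cdot\text{(stuff)}$ — more carefully, from $rm\approx p\cdot 2^{\ell+1}$ and the bound $(r+1)m\le p\cdot2^{\ell+1}+5\cdot2^{\ell-2}$ one reads off $r\approx 2p\cdot2^{\ell-2}$-ish, and a direct computation shows $k=r+2^{\ell-2}+1$ satisfies $m/k\to 3$. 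Since $\Gamma(k)\ge m$ whenever $k\ge\mathfrak K(m)$, this gives $\limsup_{k\to\infty}\Gamma(k)/k\ge 3$, completing the proof.

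\textbf{Main obstacle.} The delicate part is the bookkeeping in the lower bound: choosing $\ell,m,r,h,p,q$ so that all five conditions of Lemma~\ref{Lem1} hold simultaneously \emph{and} the ratio $m/(r+2^{\ell-2}+1)$ tends to $3$. The arithmetic constraints (the modular condition forcing $m\equiv1\pmod 8$ and $q-p=(m-1)/8$, the window condition $0\le h<2^{\ell-2}$, and the inequality on $(r+1)m$) interact, so one must verify they are mutually consistent and leave enough freedom to satisfy the non-constancy condition ${\bf t}_{p+1}\ne{\bf t}_{q+1}$. I expect the cleanest presentation is to fix an explicit family, e.g.\ take $m$ to be a specific odd integer congruent to $1$ modulo $8$ just below $3\cdot 2^{\ell-2}$, solve for $r$ and $h$ directly, and then note that $p$ and $q$ can be shifted by a common large power of $2$ to force the last condition — reducing the whole step to one explicit computation plus one appeal to the fact that ${\bf t}$ takes both values infinitely often.
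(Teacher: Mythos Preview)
Your upper-bound argument is essentially the paper's: both invoke Corollary~\ref{Cor1} to get $k-1\ge 2^{\delta(m)}\ge m/3$ and then squeeze out the extra unit using that $m$ is odd. The paper observes that for odd $m\ge 7$ the number $m/3$ is never a power of $2$, so $2^{\delta(m)}>m/3$ strictly; you instead note that $3\cdot 2^{\delta(m)}$ is even for $\delta(m)\ge 1$, hence $m\le 3\cdot 2^{\delta(m)}-1\le 3(k-1)-1$. Both routes give $m\le 3k-4$.

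The lower-bound sketch has a genuine gap. Your derivation that the second and fourth bullets of Lemma~\ref{Lem1} force $m\equiv 1\pmod 8$ and $q-p=(m-1)/8$ is correct and useful (note, incidentally, that your first suggested value $m=3\cdot 2^{\ell-2}-1$ is $\equiv 7\pmod 8$ for $\ell\ge 5$, so already violates this). But once $m$, $\ell$, and $r$ are fixed, the numbers $p$, $q$, $h$ are \emph{completely determined} by the bullet equations; there is no ``freedom to add multiples of a large power of $2$ to $p$ and $q$ simultaneously.'' The only way to shift $p$ and $q$ while preserving $h$ is to increase $r$ by a multiple of $2^{\ell+1}$ (since $m$ is odd), and that makes $k=r+2^{\ell-2}+1$ enormous, destroying the ratio $m/k\to 3$. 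Moreover, subtracting the second bullet from the third gives $h\le 3\cdot 2^{\ell-2}-m$, so for $m$ close to $3\cdot 2^{\ell-2}$ the admissible residue window for $rm\bmod 2^{\ell+1}$ is tiny; exhibiting an $r=o(2^{\ell-2})$ that lands in it \emph{and} makes ${\bf t}_{p+1}\ne{\bf t}_{q+1}$ is the entire content of the construction, and you have not done it.

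The paper resolves this by writing down an explicit one-parameter family: with $\ell=2\alpha+2$ it takes $m=3\cdot 2^{2\alpha}-2^\alpha+1$, $r=2^\alpha+1$, $h=1$, $p=3\cdot 2^{\alpha-3}$, $q=3\cdot 2^{2\alpha-3}+2^{\alpha-2}$, and verifies by counting $1$'s in the binary expansions of $p$ and $q$ that ${\bf t}_{p+1}=0\ne 1={\bf t}_{q+1}$. This yields $k_\alpha=2^{2\alpha}+2^\alpha+2$ with $\Gamma(k_\alpha)\ge m$ and $m/k_\alpha\to 3$. The moral is that the fifth condition of Lemma~\ref{Lem1} cannot be argued abstractly via ``the Thue--Morse word is not eventually constant''; it requires a concrete choice and a direct check.
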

\begin{proof}
Fix $k\geq 3$, and let $m\in(2\mathbb Z^+-1)\setminus\mathcal F(k)$. If $m\leq 5$, then $m\leq 3k-4$ as desired, so assume $m\geq 7$. By Corollary \ref{Cor1}, $k-1\geq 2^{\delta(m)}$, where $\delta(m)=\lceil\log_2(m/3)\rceil$. Since $m\geq 7$ is odd, $\delta(m)>\log_2(m/3)$. This shows that $k-1\geq 2^{\delta(m)}>m/3$, so $m\leq 3k-4$. Consequently, $\Gamma(k)\leq 3k-4$. 

We now show that $\displaystyle{\limsup_{k\to\infty}\frac{\Gamma(k)}{k}}=3$. For each positive integer $\alpha$, let $k_\alpha=2^{2\alpha}+2^\alpha+2$. Let us fix an integer $\alpha\geq 3$ and set $r=2^\alpha+1$, $m=3\cdot2^{2\alpha}-2^\alpha+1$, $\ell=2\alpha+2$, $h=1$, $p=3\cdot 2^{\alpha-3}$, and $q=3\cdot 2^{2\alpha-3}+2^{\alpha-2}$. One may easily verify that these values of $r,m,\ell,h,p$, and $q$ satisfy the first four of the five conditions listed in Lemma \ref{Lem1}. Recall that the parity of ${\bf t}_i$ is the same as the parity of the number of $1$'s in the binary expansion of $i-1$. The binary expansion of $p$ has exactly two $1$'s, and the binary expansion of $q$ has exactly three $1$'s. Therefore, ${\bf t}_{p+1}=0\neq 1={\bf t}_{q+1}$. This shows that all of the conditions in Lemma \ref{Lem1} are satisfied, so $\mathfrak K(m)\leq r+2^{\ell-2}+1=k_\alpha$. The prefix of $\bf t$ of length $k_\alpha m$ is not a $k_\alpha$-anti-power, so $\Gamma(k_\alpha)\geq m=3\cdot 2^{2\alpha}-2^\alpha+1$. For each $\alpha\geq 3$, \[\frac{\Gamma(k_\alpha)}{k_\alpha}\geq \frac{3\cdot 2^{2\alpha}-2^\alpha+1}{2^{2\alpha}+2^\alpha+2}.\qedhere\] 
\end{proof}

In the preceding proof, we found an increasing sequence of positive integers $(k_\alpha)_{\alpha\geq 3}$ with the property that $\Gamma(k_\alpha)/k_\alpha\to 3$ as $\alpha\to\infty$. It will be useful to have two other sequences with similar properties. This is the content of the following lemma. 
\begin{lemma}\label{Lem2}
For integers $\alpha\geq 3$, $\beta\geq 9$, and $\rho\geq 4$, define \[k_\alpha=2^{2\alpha}+2^\alpha+2,\hspace{.5cm}K_\beta=2^{2\beta+1}+3\cdot2^{\beta+3}+49,\hspace{.5cm}and \hspace{.5cm}\kappa_\rho=2^\rho+2.\] We have \[\Gamma(k_\alpha)\geq 3\cdot 2^{2\alpha}-2^\alpha+1,\hspace{.5cm}\Gamma(K_\beta)\geq 3\cdot 2^{2\beta+1}-2^{\beta-1}+1,\hspace{.5cm}and\hspace{.5cm}\Gamma(\kappa_\rho)\geq 5\cdot 2^{\rho-1}-8\chi(\rho)+1, 
\] where $\displaystyle{\chi(\rho)=\begin{cases} 1, & \mbox{if } \rho\equiv 0\pmod 2; \\ 2, & \mbox{if } \rho\equiv 1\pmod 2. \end{cases}}$
\end{lemma}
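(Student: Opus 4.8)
\textbf{Proof proposal for Lemma \ref{Lem2}.}

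The plan is to derive the first bound directly from the computation already carried out in the proof of Theorem \ref{Thm1}, and then to establish the two remaining bounds by the same strategy: for each of the three families, exhibit explicit nonnegative integers $r,m,\ell,h,p,q$ (depending on the relevant parameter $\alpha$, $\beta$, or $\rho$) satisfying the five hypotheses of Lemma \ref{Lem1}, conclude that $\mathfrak K(m)\leq r+2^{\ell-2}+1$ equals the index $k_\alpha$, $K_\beta$, or $\kappa_\rho$, and then invoke the observation (noted in the paragraph preceding Lemma \ref{Lem1}) that $\Gamma(k)\geq m$ whenever $k\geq\mathfrak K(m)$. For the $k_\alpha$ bound there is nothing new to do: the proof of Theorem \ref{Thm1} already shows $\Gamma(k_\alpha)\geq 3\cdot 2^{2\alpha}-2^\alpha+1$ for $\alpha\geq 3$, so I would simply cite that.

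For the $K_\beta$ family, I would set (for $\beta\geq 9$) something like $\ell=2\beta+3$, $m=3\cdot 2^{2\beta+1}-2^{\beta-1}+1$, $r=2^{\beta-1}+C$ for an appropriate small constant $C$ chosen so that $r+2^{\ell-2}+1=K_\beta=2^{2\beta+1}+3\cdot 2^{\beta+3}+49$, and then solve the two linear relations $rm=p\cdot 2^{\ell+1}+2^{\ell-1}+h$ and $(r+2^{\ell-2})m=q\cdot 2^{\ell+1}+3\cdot 2^{\ell-2}+h$ for $h,p,q$; the constant $49$ and the exponents in the definition of $K_\beta$ are presumably reverse-engineered precisely so that these come out to be nonnegative integers with $h<2^{\ell-2}$ and with $(r+1)m\leq p\cdot 2^{\ell+1}+5\cdot 2^{\ell-2}$. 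Having pinned down $p$ and $q$ explicitly as polynomials in $2^\beta$, the last condition ${\bf t}_{p+1}\neq{\bf t}_{q+1}$ is checked by counting the number of $1$'s in the binary expansions of $p$ and $q$ and verifying they have opposite parity (exactly as in the Theorem \ref{Thm1} argument). The $\kappa_\rho$ family is handled the same way but with a case split on the parity of $\rho$, which is why the correction term $8\chi(\rho)$ appears: one takes $\ell=\rho$, $m=5\cdot 2^{\rho-1}-8\chi(\rho)+1$, $r$ such that $r+2^{\ell-2}+1=2^\rho+2$, i.e. $r=3\cdot 2^{\rho-2}+1$, and then again solves for $h,p,q$, the two parity classes of $\rho$ forcing slightly different admissible values of $m$ (hence $\chi$) so that the divisibility/size constraints and the parity-of-bit-count condition all hold.

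The main obstacle is entirely bookkeeping rather than conceptual: one must verify that the five conditions of Lemma \ref{Lem1} hold \emph{simultaneously} for the stated closed-form choices, and in particular that the inequalities $h<2^{\ell-2}$ and $(r+1)m\leq p\cdot 2^{\ell+1}+5\cdot 2^{\ell-2}$ are not just asymptotically valid but hold for every $\alpha\geq 3$, $\beta\geq 9$, $\rho\geq 4$ — this is why the lower bounds on the parameters are exactly as stated, and a careful proof would check the boundary cases $\beta=9$ and $\rho=4,5$ by hand. The parity condition ${\bf t}_{p+1}\neq{\bf t}_{q+1}$ requires writing $p+1$ and $q+1$ in binary; since $p$ and $q$ are, after substitution, of the form (power of two) $+$ (small sum of powers of two), their binary digit counts are transparent, and the only delicate point is making sure no carries merge digits in a way that changes the parity of the digit sum. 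Once these verifications are in place, Lemma \ref{Lem1} delivers $\mathfrak K(m)\leq r+2^{\ell-2}+1$ for each family, and the three claimed inequalities follow immediately.
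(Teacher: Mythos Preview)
Your overall strategy---citing the proof of Theorem~\ref{Thm1} for the $k_\alpha$ bound and invoking Lemma~\ref{Lem1} with explicit parameters for the other two families---is exactly what the paper does. However, the specific parameter guesses you give for $K_\beta$ and $\kappa_\rho$ do not work, and the error is more than bookkeeping.

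For the $\kappa_\rho$ family, your choice $\ell=\rho$ is fatal. Combining the second and third hypotheses of Lemma~\ref{Lem1} yields $m\leq 3\cdot 2^{\ell-2}-h<3\cdot 2^{\ell-2}$, but $m=5\cdot 2^{\rho-1}-8\chi(\rho)+1$ exceeds $3\cdot 2^{\rho-2}$ for every $\rho\geq 4$; so with $\ell=\rho$ the conditions are unsatisfiable regardless of the other choices. The paper instead takes $\ell=\rho+2$ and $r=1$ (which still gives $r+2^{\ell-2}+1=2^\rho+2=\kappa_\rho$), and then $p=0$, $h=2^{\rho-1}-8\chi(\rho)+1$, and $q=5\cdot 2^{\rho-4}-\chi(\rho)$; the value of $\chi(\rho)$ is exactly what is needed to make the binary digit sum of $q$ odd in each parity class of $\rho$.

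For the $K_\beta$ family, your guess $r=2^{\beta-1}+C$ is off by more than a constant: with $\ell=2\beta+3$ one has $2^{\ell-2}=2^{2\beta+1}$, so $r=K_\beta-2^{2\beta+1}-1=3\cdot 2^{\beta+3}+48$, not $2^{\beta-1}+O(1)$. With that $r$ the paper obtains $h=48$, $p=9\cdot 2^\beta+17$, and $q=3\cdot 2^{2\beta-2}+143\cdot 2^{\beta-4}+17$; the lower bound $\beta\geq 9$ is exactly what prevents carries among the summands of $q$, so that $q$ has nine binary $1$'s while $p$ has four. In short, the correct $\ell$ (and hence $r$) in each family is a genuine choice that must be made before the rest of the computation can proceed, not something that can be backed out afterward.
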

\begin{proof}
We already derived the lower bound for $\Gamma(k_\alpha)$ in the proof of Theorem \ref{Thm1}. To prove the lower bound for $\Gamma(K_\beta)$, put $r=3\cdot 2^{\beta+3}+48$, $m=3\cdot 2^{2\beta+1}-2^{\beta-1}+1$, $\ell=2\beta+3$, $h=48$, $p=9\cdot2^\beta+17$, and $q=3\cdot 2^{2\beta-2}+143\cdot2^{\beta-4}+17$. Straightforward calculations show that these choices of $r,m,\ell,h,p$, and $q$ satisfy the first four conditions of Lemma \ref{Lem1}. The binary expansion of $p$ has exactly four $1$'s while that of $q$ has exactly nine $1$'s (it is here that we require $\beta\geq 9$). It follows that ${\bf t}_{p+1}=0\neq 1={\bf t}_{q+1}$, so the final condition in Lemma \ref{Lem1} is also satisfied. The lemma tells us that $\mathfrak K(m)\leq r+2^{\ell-2}+1=K_\beta$, so the prefix of $\bf t$ of length $K_\beta m$ is not a $K_\beta$-anti-power. Hence, $\Gamma(K_\beta)\geq m=3\cdot 2^{2\beta+1}-2^{\beta-1}+1$.

To prove the lower bound for $\kappa_\rho$, we again invoke Lemma \ref{Lem1}. Let $r'=1$, $m'=5\cdot 2^{\rho-1}-8\chi(\rho)+1$, $\ell'=\rho+2$, $h'=2^{\rho-1}-8\chi(\rho)+1$, $p'=0$, and $q'=5\cdot 2^{\rho-4}-\chi(\rho)$. These choices satisfy the first four conditions in Lemma \ref{Lem1}. The binary expansion of $q'$ has an odd number of $1$'s, so ${\bf t}_{p'+1}={\bf t}_1=0\neq 1={\bf t}_{q'+1}$. We now know that $\mathfrak K(m')\leq r'+2^{\ell'-2}+1=\kappa_\rho$, so $\Gamma(\kappa_\rho)\geq m'=5\cdot 2^{\rho-1}-8\chi(\rho)+1$.  
\end{proof}

We now use the sequences $(k_\alpha)_{\alpha\geq 3}$, $(K_\beta)_{\beta\geq 9}$, and $(\kappa_\rho)_{\rho\geq 4}$ to prove that $\displaystyle{\liminf_{k\to\infty}}(\Gamma(k)/k)=3/2$.    

\begin{theorem}\label{Thm2}
Let $\Gamma(k)$ be as in Definition \ref{Def2}. We have $\displaystyle{\liminf_{k\to\infty}\frac{\Gamma(k)}{k}}=\frac{3}{2}$. 
\end{theorem}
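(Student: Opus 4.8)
To show $\liminf_{k\to\infty}(\Gamma(k)/k)=3/2$, I need two inequalities: $\liminf(\Gamma(k)/k)\geq 3/2$ and $\liminf(\Gamma(k)/k)\leq 3/2$. For the lower bound, I would argue that any $m\in(2\mathbb Z^+-1)\setminus\mathcal F(k)$ with $m$ large forces $k$ to be large. Specifically, Corollary \ref{Cor1} gives $k-1\geq 2^{\delta(m)}$ where $\delta(m)=\lceil\log_2(m/3)\rceil$, but that only yields $k-1\geq m/3$. To get the factor $3/2$, I would push harder: I claim that when $m$ is close to $\Gamma(k)$ and $k$ is such that $\Gamma(k)/k$ is near its $\liminf$, the value $2^{\delta(m)}$ must be at least roughly $2m/3$ (because $\delta(m)=\lceil\log_2(m/3)\rceil$ can be as large as $\log_2(m/3)+1$ when $m/3$ is just above a power of $2$), which would give $k-1\geq 2^{\delta(m)}\approx 2m/3$, i.e. $m\lesssim 3(k-1)/2$. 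So the lower bound $\liminf(\Gamma(k)/k)\geq 3/2$ should follow from a careful analysis of how $\delta(m)$ behaves: for each $j$, the largest odd $m$ with $\delta(m)=j$ is $m\leq 3\cdot 2^j-1$, and such an $m$ can only lie outside $\mathcal F(k)$ if $k-1\geq 2^j\geq (m+1)/3$... this gives only $3$, so in fact I need the sharper observation that the relevant $m$ values near $\Gamma(k)$ for the $\liminf$ come from $\delta(m)$ roughly one larger, forcing $k\gtrsim 2m/3$.

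**The construction for the upper bound.** For $\liminf(\Gamma(k)/k)\leq 3/2$, I would exhibit an increasing sequence of indices along which $\Gamma(k)/k\to 3/2$. The natural candidate is $(K_\beta)_{\beta\geq 9}$ from Lemma \ref{Lem2}: we have $K_\beta=2^{2\beta+1}+3\cdot 2^{\beta+3}+49$ and $\Gamma(K_\beta)\geq 3\cdot 2^{2\beta+1}-2^{\beta-1}+1$, so $\Gamma(K_\beta)/K_\beta\to 3$, which is the wrong direction — that sequence realizes the $\limsup$. For the $\liminf$ I instead want indices $k$ for which $\Gamma(k)$ is as small as possible relative to $k$, i.e. $k$ slightly larger than $2^j$ for some $j$ with no forbidden odd $m$ much above $3k/2$. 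Here is where I would combine the upper bound $\Gamma(k)\leq 3k-4$ with the precise value of $\Gamma(k)$ just past a threshold: take $k_\alpha=2^{2\alpha}+2^\alpha+2$ (or a nearby value); by the lower bound part, $\Gamma(k_\alpha)\geq 3\cdot 2^{2\alpha}-2^\alpha+1\approx 3k_\alpha$, so again wrong direction. So the right move is to evaluate $\Gamma$ at a value of $k$ just slightly below $k_\alpha$ — say $k=2^{2\alpha}$ or $k=2^{2\alpha}+1$ — where the large forbidden $m\approx 3\cdot 2^{2\alpha}$ is no longer available (since it requires $k-1\geq 2^{2\alpha+1}/\dots$), and show $\Gamma$ at that index is only about $3/2$ times the index, matching the general lower bound.

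**Carrying it out.** Concretely I would: (1) prove the lower bound $\Gamma(k)\geq$ roughly $3(k-1)/2$ fails but $\Gamma(k)\leq$ something like $3(k-1)/2 + O(1)$ holds for an infinite family — using Corollary \ref{Cor1} to rule out odd $m$ with $\delta(m)=j$ unless $k-1\geq 2^j$, so that for $k$ with $2^{j-1}\leq k-1<2^j$ the largest possible forbidden odd $m$ has $\delta(m)\leq j-1$, hence $m\leq 3\cdot 2^{j-1}-1\leq 3(k-1)/2\cdot(1+o(1))$; (2) for the matching lower bound on $\Gamma$ at these indices, invoke Lemma \ref{Lem1} with carefully chosen parameters (analogous to the $(\kappa_\rho)$ computation, but scaled to produce $m\approx 3\cdot 2^{j-1}$ and $\mathfrak K(m)\approx 2^{j-1}$), so that $\Gamma(k)\geq m\approx 3(k-1)/2$ for $k=2^{j-1}+O(1)$; (3) conclude $\liminf(\Gamma(k)/k)\leq 3/2$ from this family and combine with step (1) for the reverse inequality. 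The main obstacle is step (2): finding explicit $r,m,\ell,h,p,q$ satisfying all five conditions of Lemma \ref{Lem1} that yield a forbidden odd $m$ of size $\approx 3\cdot 2^{\ell-3}$ with $\mathfrak K(m)\approx 2^{\ell-2}$ — the parity condition ${\bf t}_{p+1}\neq{\bf t}_{q+1}$ is delicate, and (as in Lemma \ref{Lem2}) it will likely force a case split on the residue of $\ell$ modulo $2$ and possibly the introduction of an auxiliary sequence like $(\kappa_\rho)$ used in tandem with $(k_\alpha)$ to cover all large $k$.
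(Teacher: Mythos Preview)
Your proposal contains a genuine gap in the direction $\liminf_{k\to\infty}\Gamma(k)/k\geq 3/2$.

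Everything you write produces information along a \emph{specific subsequence} of $k$'s. Step (1) (Corollary~\ref{Cor1} applied on the dyadic block $2^{j-1}\le k-1<2^j$) gives $\Gamma(k)\le 3\cdot 2^{j-1}-1$, but that is only $\approx 3(k-1)/2$ at the \emph{top} of the block ($k=2^j$); at the bottom it is $\approx 3(k-1)$. So step~(1) correctly yields $\liminf\le 3/2$ along $k=2^j$ (this matches the paper), but your claim ``$m\le 3(k-1)/2\cdot(1+o(1))$'' for all $k$ in the block is false. Step (2) proposes to find, via Lemma~\ref{Lem1}, odd $m\approx 3\cdot 2^{j-1}$ with $\mathfrak K(m)\approx 2^{j-1}$; even if you succeed, this shows $\Gamma(k)\gtrsim 3k/2$ only for those particular $k$. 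None of this bounds $\Gamma(k)/k$ from below for \emph{all} large $k$, which is what a lower bound on a $\liminf$ requires.

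The idea you are missing is the paper's use of monotonicity of $\Gamma$ to turn the constructions of Lemma~\ref{Lem2} into an \emph{upper bound on $k$} for arbitrary $k$. Fix any large $k$, set $m=\Gamma(k)$, and let $\ell=\lceil\log_2 m\rceil$. Depending on where $m$ sits in $(2^{\ell-1},2^\ell)$ and on the parity of $\ell$, one of the three sequences $\kappa_{\ell-1}$, $k_{(\ell-2)/2}$, $K_{(\ell-3)/2}$ from Lemma~\ref{Lem2} provides a concrete $k^\ast$ with $\Gamma(k^\ast)>m$; since $\Gamma$ is nondecreasing this forces $k<k^\ast$, and hence $\Gamma(k)/k>2^{\ell-1}/k^\ast$ (or the analogous ratio), which tends to $3/2$. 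Corollary~\ref{Cor1} alone cannot do this: it gives only $k-1\ge 2^{\delta(m)}\ge m/3$ uniformly, i.e.\ $\Gamma(k)\le 3k-3$, and your remark that $2^{\delta(m)}$ can be as large as $2m/3$ applies only to special $m$, not to $m=\Gamma(k)$ for arbitrary $k$. Your proposal never invokes this monotonicity-plus-Lemma~\ref{Lem2} mechanism, so the lower bound half is not established.
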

\begin{proof}
Let $k\geq 3$ be a positive integer, and let $m=\Gamma(k)$. Put $\delta(m)=\lceil\log_2(m/3)\rceil$. Corollary \ref{Cor1} tells us that $k-1\geq 2^{\delta(m)}$. Suppose $k$ is a power of $2$, say $k=2^\lambda$. Then the inequality $k-1\geq 2^{\delta(m)}$ forces $\delta(m)\leq \lambda-1$. Thus, $m\leq 3\cdot2^{\lambda-1}=\dfrac{3}{2}k$. This shows that $\dfrac{\Gamma(k)}{k}\leq\dfrac{3}{2}$ whenever $k$ is a power of $2$, so $\displaystyle{\liminf_{k\to\infty}\frac{\Gamma(k)}{k}}\leq\frac{3}{2}$.

To prove the reverse inequality, we will make use of Lemma \ref{Lem2}. Recall the definitions of $k_\alpha$, $K_\beta$, $\kappa_\rho$, and $\chi(\rho)$ from that lemma. Fix $k\geq \kappa_{18}$, and put $m=\Gamma(k)$. Because $k\geq \kappa_{18}$, we may use Lemma \ref{Lem2} and the fact that $\Gamma$ is nondecreasing (see Remark \ref{Rem1}) to see that $m=\Gamma(k)\geq\Gamma(\kappa_{18})\geq 5\cdot 2^{17}-7$. Let $\ell=\lceil\log_2 m\rceil$ so that $2^{\ell-1}<m<2^\ell$. Note that $\ell\geq 20$. Let us first assume that $3\cdot 2^{\ell-2}-2^{(\ell-2)/2}<m<2^\ell$. Lemma \ref{Lem2} tells us that $\Gamma(\kappa_{\ell-1})\geq 5\cdot 2^{\ell-2}-8\chi(\ell-1)+1$. We also know that $5\cdot 2^{\ell-2}-8\chi(\ell-1)+1>m$, so $\Gamma(\kappa_{\ell-1})>m$. Because $\Gamma$ is nondecreasing, $\kappa_{\ell-1}>k$. Thus, 
\begin{equation}\label{Eq1}
\frac{\Gamma(k)}{k}>\frac{3\cdot 2^{\ell-2}-2^{(\ell-2)/2}}{\kappa_{\ell-1}}=\frac{3\cdot 2^{\ell-2}-2^{(\ell-2)/2}}{2^{\ell-1}+2}
\end{equation} if $3\cdot 2^{\ell-2}-2^{(\ell-2)/2}<m<2^\ell$.

Next, assume $2^{\ell-1}<m\leq 3\cdot 2^{\ell-2}-2^{(\ell-2)/2}$ and $\ell$ is even. According to Lemma \ref{Lem2}, $\Gamma(k_{(\ell-2)/2})\geq 3\cdot 2^{\ell-2}-2^{(\ell-2)/2}+1>m$. Because $\Gamma$ is nondecreasing, $k<k_{(\ell-2)/2}$. Therefore, 
\begin{equation}\label{Eq2}
\frac{\Gamma(k)}{k}>\frac{2^{\ell-1}}{k_{(\ell-2)/2}}=\frac{2^{\ell-1}}{2^{\ell-2}+2^{(\ell-2)/2}+2}.
\end{equation}   

Finally, suppose $2^{\ell-1}<m\leq 3\cdot 2^{\ell-2}-2^{(\ell-2)/2}$ and $\ell$ is odd. Lemma \ref{Lem2} states that $\Gamma(K_{(\ell-3)/2})\geq 3\cdot 2^{\ell-2}-2^{(\ell-5)/2}+1>m$. We know that $k<K_{(\ell-3)/2}$ because $\Gamma$ is nondecreasing. As a consequence, \begin{equation}\label{Eq3}
\frac{\Gamma(k)}{k}>\frac{2^{\ell-1}}{K_{(\ell-3)/2}}=\frac{2^{\ell-1}}{2^{\ell-2}+3\cdot 2^{(\ell+3)/2}+49}.
\end{equation} The inequalities in \eqref{Eq1}, \eqref{Eq2}, and \eqref{Eq3} show that in all cases, $\displaystyle{\frac{\Gamma(k)}{k}>\frac{3\cdot 2^{\ell-2}-2^{(\ell-2)/2}}{2^{\ell-1}+2}
}$. Because $\ell\to\infty$ as $k\to\infty$ ($\Gamma(k)$ cannot be bounded since we have just shown $\Gamma(k)/k$ is bounded away from $0$), we find that $\displaystyle{\liminf_{k\to\infty}\Gamma(k)/k}\geq 3/2$. 
\end{proof}

\section{Asymptotics for $\gamma(k)$}
Having demonstrated that $\displaystyle{\liminf_{k\to\infty}(\Gamma(k)/k)}=3/2$ and $\displaystyle{\limsup_{k\to\infty}(\Gamma(k)/k)}=3$, we turn our attention to $\gamma(k)$. To begin the analysis, we prove some lemmas that culminate in an upper bound for $\mathfrak K(m)$ for any odd positive integer $m$. It will be useful to keep in mind that if $j$ is a nonnegative integer, then ${\bf t}_{2j}\neq {\bf t}_{2j+1}={\bf t}_{j+1}$ and ${\bf t}_{4j+2}={\bf t}_{4j+3}$.

\begin{lemma}\label{Lem3}
Let $m$ be an odd positive integer, and let $\ell=\lceil\log_2 m\rceil$. If $\mathfrak K(m)>2^\ell+1$, then ${\bf t}_{m+1}{\bf t}_{m+2}=11$ and ${\bf t}_{2m+1}{\bf t}_{2m+2}=10$.
\end{lemma}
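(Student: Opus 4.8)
The plan is to prove the contrapositive in spirit: assume $\mathfrak K(m) > 2^\ell + 1$, which means that the prefix of $\bf t$ of length $(2^\ell+1)m$ is a $(2^\ell+1)$-anti-power, so in particular all the blocks $\langle jm+1, (j+1)m\rangle$ for $0 \le j \le 2^\ell$ are distinct. The idea is to compare specific pairs of these blocks and use overlap-freeness of $\bf t$ (together with the structural facts about even-indexed factors lying in $\{01,10\}^{\le\omega}$ and Fact \ref{Fact1}) to pin down the four letters ${\bf t}_{m+1}, {\bf t}_{m+2}, {\bf t}_{2m+1}, {\bf t}_{2m+2}$. Since $m$ is odd, $2^{\ell-1} < m < 2^\ell$, so $2m$ and $3m$ straddle $2^\ell$ and $4^{?}$ in a way that lets us invoke Fact \ref{Fact1} with $n$ around $\ell$.

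First I would show ${\bf t}_{m+1} = {\bf t}_{m+2}$ and ${\bf t}_{2m+1} \ne {\bf t}_{2m+2}$ using parity/structure of $\bf t$: since $m$ is odd, $m+1$ is even, so $\langle 2a+1, 2b\rangle \in \{01,10\}^{\le\omega}$-type considerations force ${\bf t}_{2m+1}{\bf t}_{2m+2}$ to be $01$ or $10$ (it is an aligned pair $\langle 2m+1, 2m+2\rangle$ with $2m$ even), giving ${\bf t}_{2m+1}\ne{\bf t}_{2m+2}$ immediately; and $m+1, m+2$ are consecutive with $m+1$ even, so ${\bf t}_{m+1}{\bf t}_{m+2}$ is a pair starting at an odd position $m+1$... wait, $m+1$ is even, so this pair is $\langle m+1, m+2\rangle$ with $m$ odd, i.e. it starts at an even index, which does NOT force it to be in $\{01,10\}$. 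Instead I would use ${\bf t}_{2j+2}={\bf t}_{2j+3}$-type identities: with $m+1 = 2s$, we have ${\bf t}_{m+1}{\bf t}_{m+2} = {\bf t}_{2s}{\bf t}_{2s+1} = {\bf t}_{2s}{\bf t}_{s+1}$, and separately compare the block $\langle 1, m\rangle$ (the first block, which starts $01\cdots$) against $\langle m+1, 2m\rangle$ and $\langle 2m+1, 3m\rangle$. The anti-power hypothesis says these are distinct, and since $\langle 1,m\rangle$ begins with $0$ and (being a prefix) is highly constrained, distinctness should force the relevant leading letters. The cleanest route is: use the fact that $\langle 1, 2^\ell\rangle = A_\ell$ and $\langle 2^\ell+1, 2^{\ell+1}\rangle = B_\ell = \overline{A_\ell}$, so the block structure around position $2^\ell$ is completely explicit; since $2^{\ell-1} < m < 2^\ell$, the second block $\langle m+1, 2m\rangle$ and third block $\langle 2m+1, 3m\rangle$ overlap the boundary at $2^\ell$, and I can read off ${\bf t}_{m+1}, {\bf t}_{m+2}$ from $A_\ell$ and ${\bf t}_{2m+1}, {\bf t}_{2m+2}$ from the junction of $A_\ell$ and $B_\ell$.

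Concretely: ${\bf t}_{m+1}$ and ${\bf t}_{m+2}$ are letters of $A_\ell$ at positions $m+1, m+2 \le 2^\ell$ (using $m < 2^\ell - 1$, which holds once $\ell$ is large — the small cases can be checked directly). To force ${\bf t}_{m+1}{\bf t}_{m+2} = 11$ rather than $00$, $01$, or $10$, I would compare the second block with the first block $\langle 1, m\rangle$ which begins $01$; if ${\bf t}_{m+1}{\bf t}_{m+2}$ began $0?$ then further comparison (shifting by $m$ again, or using that $A_\ell$ has a recursive structure) would eventually produce an equality of two blocks, contradicting the anti-power property; the pair $10$ is ruled out because $\langle m+1, m+2\rangle$ starts at an even index of $\bf t$ and ${\bf t}_{2j}{\bf t}_{2j+1}$ with $2j$ even... here $m+1$ even means ${\bf t}_{m+1}{\bf t}_{m+2}={\bf t}_{m+1}{\bf t}_{m+1+1}$, and ${\bf t}_{m+1+1}={\bf t}_{m+2}={\bf t}_{(m+1)/2 + 1}$ since $m+2 = (m+1)+1$ is odd. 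Similarly ${\bf t}_{2m+1}{\bf t}_{2m+2}$ is an aligned $\{01,10\}$-block, and the boundary structure $A_\ell B_\ell = \ldots 0 1 \ldots$ or $\ldots 1 0 \ldots$ at position $2^\ell$ together with $2m$ being just past $2^\ell$ forces it to be $10$.

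\textbf{Main obstacle.} The delicate part is the exclusion of the "wrong" two-letter values — showing that if ${\bf t}_{m+1}{\bf t}_{m+2} \ne 11$ or ${\bf t}_{2m+1}{\bf t}_{2m+2} \ne 10$ then two of the blocks $\langle jm+1,(j+1)m\rangle$ with $0 \le j \le 2^\ell$ must coincide, contradicting $\mathfrak K(m) > 2^\ell+1$. This requires carefully choosing which two indices $j$ to compare and then invoking overlap-freeness (Thue) to convert a partial agreement of factors into a full equality, while tracking the $\pm 1$ shifts coming from $m$ being odd and from $m$ not being an exact power of $2$. I expect to need a short separate verification for small $\ell$ (say $\ell \le$ some explicit bound) where the generic boundary argument is too tight, and then the general argument for large $\ell$. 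I also anticipate using Fact \ref{Fact1} to express the blocks near positions $m$, $2m$, $3m$ in terms of $A_{\ell}, B_\ell$ and their sub-blocks $A_{\ell-1}, B_{\ell-1}$, since $2^{\ell-1} < m < 2^\ell$ places the cut points of these blocks inside a single $A_\ell$ or across the $A_\ell B_\ell$ seam in a controlled way.
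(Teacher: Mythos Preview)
Your proposal has a genuine gap: you are comparing the wrong blocks. You look at blocks with indices $j=0,1,2$, i.e.\ at positions $1$, $m+1$, $2m+1$, and then try to read off the letters ${\bf t}_{m+1},{\bf t}_{m+2},{\bf t}_{2m+1},{\bf t}_{2m+2}$ by locating those positions inside $A_\ell$ or across the $A_\ell B_\ell$ seam. That approach does not give any clean mechanism for turning ``these three blocks are pairwise distinct'' into a constraint on exactly those four letters; your own ``Main obstacle'' paragraph is essentially an acknowledgment that no such mechanism is in sight, and the ad hoc parity computations you attempt (e.g.\ trying to decide whether ${\bf t}_{m+1}{\bf t}_{m+2}$ can be $10$) do not go through.

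The missing idea is to compare the blocks at indices $j=0$, $j=2^{\ell-1}$, and $j=2^{\ell}$ instead. For these choices $jm$ is a multiple of $2^{\ell-1}$, so by Fact~\ref{Fact1} the block $\langle jm+1,(j+1)m\rangle$ (of length $m\leq 2^\ell$) is a \emph{prefix} of $\mu^{\ell-1}({\bf t}_{nm+1}{\bf t}_{nm+2})$ where $j=n\cdot 2^{\ell-1}$, $n\in\{0,1,2\}$. Hence if two of the two-letter words ${\bf t}_1{\bf t}_2$, ${\bf t}_{m+1}{\bf t}_{m+2}$, ${\bf t}_{2m+1}{\bf t}_{2m+2}$ coincided, two of those three blocks would coincide, contradicting $\mathfrak K(m)>2^\ell+1$. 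Now the endgame is immediate: ${\bf t}_1{\bf t}_2=01$; the pair ${\bf t}_{2m+1}{\bf t}_{2m+2}$ is aligned (since $2m$ is even) and hence lies in $\{01,10\}$, so it must be $10$; and $\mu({\bf t}_{m+1})={\bf t}_{2m+1}{\bf t}_{2m+2}=10$ forces ${\bf t}_{m+1}=1$, so ${\bf t}_{m+1}{\bf t}_{m+2}\in\{10,11\}$ and distinctness from $10$ gives $11$. No overlap-freeness, no boundary chasing, and no small-$\ell$ case split are needed.
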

\begin{proof}
Let $w_0=\langle 1,m\rangle$, $w_1=\langle 2^{\ell-1}m+1,(2^{\ell-1}+1)m\rangle$, and $w_2=\langle 2^{\ell}m+1,(2^\ell+1)m\rangle$. The words $w_0,w_1,w_2$ must be distinct because $\mathfrak K(m)>2^\ell+1$. For each $n\in\{0,1,2\}$, $w_n$ is a prefix of \\ $\langle nm2^{\ell-1}+1,(nm+2)2^{\ell-1}\rangle=\mu^{\ell-1}({\bf t}_{nm+1}{\bf t}_{nm+2})$. It follows that ${\bf t}_1{\bf t}_2$, ${\bf t}_{m+1}{\bf t}_{m+2}$, and ${\bf t}_{2m+1}{\bf t}_{2m+2}$ are distinct. Since ${\bf t}_1{\bf t}_2=01$ and ${\bf t}_{2m+1}\neq{\bf t}_{2m+2}$, we must have ${\bf t}_{2m+1}{\bf t}_{2m+2}=10$. Now, ${\bf t}_{2m+1}{\bf t}_{2m+2}=\mu({\bf t}_{m+1})$, so ${\bf t}_{m+1}=1$. This forces ${\bf t}_{m+1}{\bf t}_{m+2}=11$. 
\end{proof}

\begin{lemma}\label{Lem4}
Let $m\geq 3$ be an odd integer, and let $\ell=\lceil\log_2 m\rceil$. Suppose there is a positive integer $j$ such that ${\bf t}_j{\bf t}_{j+1}={\bf t}_{m+j}{\bf t}_{m+j+1}$. Then $\mathfrak K(m)<\left(1+\dfrac{j+1}{m}\right)2^\ell$.
\end{lemma}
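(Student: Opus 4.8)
The plan is to exhibit two equal factors among the first $\mathfrak K(m)$-many length-$m$ blocks of $\bf t$, where the block indices are forced to be small by the hypothesis. The hypothesis gives us a letter coincidence ${\bf t}_j{\bf t}_{j+1}={\bf t}_{m+j}{\bf t}_{m+j+1}$ at position $j$; the idea is to "zoom out" via $\mu^{\ell-1}$: a single pair of adjacent letters expands under $\mu^{\ell-1}$ to a block of length $2^\ell$, which is longer than $m$ (since $2^{\ell-1} < m \le 2^\ell$), so it contains a length-$m$ factor. First I would write each relevant length-$m$ window of $\bf t$ as a sub-factor of $\mu^{\ell-1}({\bf t}_a {\bf t}_{a+1})$ for an appropriate index $a$, exactly as in the proof of Lemma \ref{Lem3}, using Fact \ref{Fact1}. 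Concretely, for a nonnegative integer $n$, the window $\langle n m 2^{\ell-1} + 1, \ldots \rangle$ of length $2^\ell$ equals $\mu^{\ell-1}({\bf t}_{nm+1}{\bf t}_{nm+2})$; the difficulty is that here the relevant starting offsets involve $j$, so I need windows that start at positions of the form (multiple of $2^{\ell-1}$) $+ j'$ for a small shift $j'$.

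Here is the key step in more detail. Write $j = 2^{\ell-1} j' + i$ with $0 \le i < 2^{\ell-1}$, or more simply: I want to locate, inside $\mu^{\ell-1}({\bf t}_{j}{\bf t}_{j+1}) = \langle (j-1)2^{\ell-1}+1, (j+1)2^{\ell-1}\rangle$, a length-$m$ factor, and compare it to the corresponding length-$m$ factor inside $\mu^{\ell-1}({\bf t}_{m+j}{\bf t}_{m+j+1})$. Since ${\bf t}_j{\bf t}_{j+1} = {\bf t}_{m+j}{\bf t}_{m+j+1}$, these two length-$(2^\ell)$ factors of $\bf t$ are literally equal as words, so any aligned length-$m$ sub-factor of the first equals the aligned sub-factor of the second. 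The block of $\bf t$ starting at position $(j-1)2^{\ell-1}+1$ that I want is the one whose index as a length-$m$ block is $\lceil ((j-1)2^{\ell-1})/m \rceil$ or thereabouts; I need the two extracted length-$m$ factors to be honest consecutive-in-the-$m$-grid blocks $\langle cm+1, (c+1)m\rangle$ and $\langle c'm+1, (c'+1)m\rangle$ of $\bf t$ for two distinct indices $c \ne c'$ with both $c, c' < (1 + (j+1)/m)2^\ell$. The count $(j+1)2^{\ell-1}$ is the position where $\mu^{\ell-1}({\bf t}_{m+j+1})$ ends relative to the matching window, and dividing the largest relevant position by $m$ gives the bound $\left(1 + \frac{j+1}{m}\right)2^\ell$; I would pick $c$ to be the block index so that $\langle cm+1,(c+1)m\rangle$ sits inside the first window and $c' = c + \frac{(m+j) - j}{\text{(scaling)}}\cdot$… — more carefully, the shift between the two windows is exactly $m 2^{\ell-1}$ in position, which is $2^{\ell-1}$ length-$m$ blocks only if $m \mid m2^{\ell-1}$, which it does, giving $c' = c + 2^{\ell-1}$. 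So the two equal blocks are $\langle cm+1,(c+1)m\rangle$ and $\langle (c+2^{\ell-1})m+1, (c+2^{\ell-1}+1)m\rangle$, and we just need $c + 2^{\ell-1} + 1 \le \left(1 + \frac{j+1}{m}\right)2^\ell$, i.e. $c + 1 \le \left(1 + \frac{j+1}{m} - \tfrac12\right)2^\ell = \left(\tfrac12 + \frac{j+1}{m}\right)2^\ell$; since the window starts at position $(j-1)2^{\ell-1}$, the first full $m$-block inside it has index $c \le (j-1)2^{\ell-1}/m + 1 \le (j+1)2^{\ell-1}/m$, and $(j+1)2^{\ell-1}/m + 1 \le (\tfrac12 + \frac{j+1}{m})2^\ell$ holds comfortably, so $\mathfrak K(m) \le c + 2^{\ell-1} + 1 < \left(1 + \frac{j+1}{m}\right)2^\ell$.

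The main obstacle I anticipate is the bookkeeping to guarantee that the length-$m$ factors I extract really are grid-aligned blocks $\langle cm+1,(c+1)m\rangle$ rather than arbitrary offset factors: I must choose $c$ so that $[cm+1, (c+1)m]$ is a subinterval of $[(j-1)2^{\ell-1}+1, (j+1)2^{\ell-1}]$, which requires $cm \ge (j-1)2^{\ell-1}$ and $(c+1)m \le (j+1)2^{\ell-1}$, i.e. there is room for a full $m$-block, and this uses $2 \cdot 2^{\ell-1} = 2^\ell \ge m$ — here we need $2^\ell > m$ strictly, which holds since $m$ is not a power of $2$ (it is odd and $\ge 3$). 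Verifying this interval-containment and that the resulting distinct block indices $c$ and $c+2^{\ell-1}$ satisfy the claimed bound is routine arithmetic once the setup is fixed, and then the equality of the two blocks is immediate from ${\bf t}_j{\bf t}_{j+1} = {\bf t}_{m+j}{\bf t}_{m+j+1}$ together with $\mu^{\ell-1}$ applied letterwise. Finally, two equal blocks among the first $\mathfrak K$ blocks means the prefix of length $(c+2^{\ell-1}+1)m$ of $\bf t$ is not a $(c+2^{\ell-1}+1)$-anti-power, so by Definition \ref{Def3} we conclude $\mathfrak K(m) \le c + 2^{\ell-1}+1 < \left(1 + \frac{j+1}{m}\right)2^\ell$.
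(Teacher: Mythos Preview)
Your overall strategy matches the paper's: use the hypothesis to get two equal length-$2^\ell$ (or similar) factors of $\bf t$, locate a grid-aligned $m$-block inside one, and conclude that the block at the shifted position is equal to it. However, there is a genuine gap in the execution.

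You expand via $\mu^{\ell-1}$, obtaining the window $\langle (j-1)2^{\ell-1}+1,(j+1)2^{\ell-1}\rangle$ of length $2^\ell$. You then assert that because $2^\ell>m$, this window contains a grid-aligned block $\langle cm+1,(c+1)m\rangle$. That inference is incorrect. The condition is that the interval $[(j-1)2^{\ell-1},\,(j+1)2^{\ell-1}-m]$ contain a multiple of $m$, and this interval has length $2^\ell-m<2^{\ell-1}<m$, so there is no guarantee. Concretely, take $m=7$, $\ell=3$, $j=4$ (one checks ${\bf t}_4{\bf t}_5={\bf t}_{11}{\bf t}_{12}=01$): the window is $\langle 13,20\rangle$, and the nearby grid blocks $\langle 8,14\rangle$ and $\langle 15,21\rangle$ both overflow it, so no admissible $c$ exists.

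The paper avoids this by expanding one level further, using $\mu^\ell$ rather than $\mu^{\ell-1}$. The resulting window $\langle (j-1)2^\ell+1,(j+1)2^\ell\rangle$ has length $2^{\ell+1}>2m$, which \emph{does} guarantee a grid-aligned $m$-block strictly inside (indeed with nonempty boundary words on both sides). The shift between the two windows is then $m\cdot 2^\ell$, giving the paired block index $r+2^\ell$ rather than your $c+2^{\ell-1}$; the bound $r+1<2^\ell(j+1)/m$ follows from the containment $(r+1)m<(j+1)2^\ell$, yielding $\mathfrak K(m)\le 2^\ell+r+1<\left(1+\dfrac{j+1}{m}\right)2^\ell$. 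Your argument becomes correct once you make this single change of exponent.
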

\begin{proof}
First, observe that 
\begin{equation}\label{Eq4}
\langle 2^\ell(j-1)+1,2^\ell(j+1)\rangle=\mu^\ell({\bf t}_j{\bf t}_{j+1})=\mu^\ell({\bf t}_{m+j}{\bf t}_{m+j+1})=\langle 2^\ell(m+j-1)+1,2^\ell(m+j+1)\rangle.
\end{equation} Because $\vert\langle 2^\ell(j-1)+1,2^\ell(j+1)\rangle\vert=2^{\ell+1}>2m$, there is a nonnegative integer $r$ such that 
\begin{equation}\label{Eq7}
\langle 2^\ell(j-1)+1,2^\ell(j+1)\rangle=w\langle rm+1,(r+1)m\rangle z
\end{equation} for some nonempty words $w$ and $z$. Note that $r+1<\dfrac{2^\ell(j+1)}{m}$. It follows from \eqref{Eq7} that \[2^\ell(m+j-1)+1< 2^\ell m+rm+1<2^\ell m+(r+1)m<2^\ell(m+j+1),\] so \[\langle 2^\ell(m+j-1)+1,2^\ell(m+j+1)\rangle=w'\langle (2^\ell+r)m+1,(2^\ell+r+1)m\rangle z'\] for some nonempty words $w'$ and $z'$. Note that $\vert w'\vert=(2^\ell+r)m-2^\ell(m+j-1)=rm-2^\ell(j-1)=\vert w\vert$. Combining this fact with \eqref{Eq4}, we find that \[\langle rm+1,(r+1)m\rangle=\langle (2^\ell+r)m+1,(2^\ell+r+1)m\rangle.\] Consequently, \[\mathfrak K(m)\leq 2^\ell+r+1<2^\ell+\frac{2^\ell(j+1)}{m}.\] 
\end{proof}

\begin{lemma}\label{Lem5}
Let $m$ be an odd positive integer with $m\not\equiv 1\pmod 8$, and let $\ell=\lceil\log_2 m\rceil$. We have $\mathfrak K(m)<\left(1+\frac{37}{m}\right)2^\ell$. 
\end{lemma}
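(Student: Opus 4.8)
The plan is to prove Lemma \ref{Lem5} by combining Lemma \ref{Lem3} and Lemma \ref{Lem4} in a case analysis on the residue of $m$ modulo $8$ (excluding $m \equiv 1 \pmod 8$, which is the hypothesis). The strategy is: if $\mathfrak K(m) \leq 2^\ell + 1$ then we are done trivially since $2^\ell + 1 \leq (1 + \frac{37}{m})2^\ell$; otherwise, Lemma \ref{Lem3} gives us the strong structural information ${\bf t}_{m+1}{\bf t}_{m+2} = 11$ and ${\bf t}_{2m+1}{\bf t}_{2m+2} = 10$, and we must exhibit a small positive integer $j$ (bounded by something like $37 - 1 = 36$) with ${\bf t}_j{\bf t}_{j+1} = {\bf t}_{m+j}{\bf t}_{m+j+1}$, so that Lemma \ref{Lem4} finishes the job. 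So the core task is: assuming the conclusions of Lemma \ref{Lem3}, find a coincidence ${\bf t}_j{\bf t}_{j+1} = {\bf t}_{m+j}{\bf t}_{m+j+1}$ with $j$ bounded by a small absolute constant.

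First I would handle the small cases $m \leq M_0$ for some explicit threshold $M_0$ (chosen so that $2^\ell + 1 < (1+\frac{37}{m})2^\ell$ is automatic, i.e. roughly $m \leq 37$) directly or by noting the trivial bound applies; alternatively one observes that for small $m$ the inequality $\mathfrak{K}(m) \le m+1$ type bounds suffice. For the main range, I would assume $\mathfrak K(m) > 2^\ell + 1$ and use the fact, recalled just before Lemma \ref{Lem3}, that ${\bf t}_{2j} \neq {\bf t}_{2j+1} = {\bf t}_{j+1}$ and ${\bf t}_{4j+2} = {\bf t}_{4j+3}$. The idea is to look at indices $j$ near $1$ and near small multiples of small powers of $2$, and use the known values ${\bf t}_{m+1} = 1$, ${\bf t}_{m+2} = 1$, ${\bf t}_{2m+1} = 1$, ${\bf t}_{2m+2} = 0$ together with the Thue-Morse recursions to deduce ${\bf t}_{m+j}$ for small $j$. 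For instance, since $m$ is odd, $m + 1$ is even, so ${\bf t}_{m+2} = {\bf t}_{(m+1)/2 + 1}$, pushing information down to a smaller index; iterating, and using the residue of $m$ mod $8$ to control how the halving interacts with the index shift, one can compute ${\bf t}_{m+j}{\bf t}_{m+j+1}$ for enough small $j$ to guarantee a match with ${\bf t}_j{\bf t}_{j+1}$ — recalling that ${\bf t}_1{\bf t}_2 = 01$, ${\bf t}_2{\bf t}_3 = 11$, ${\bf t}_3{\bf t}_4 = 10$, ${\bf t}_4{\bf t}_5 = 01$, etc. The constant $37$ suggests $j$ can always be taken to be at most $36$, and the case split on $m \bmod 8 \in \{0, 2, 3, 4, 5, 6, 7\}$ (seven cases, or fewer after grouping even/odd sub-analysis) determines which small $j$ works in each case.

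The main obstacle I anticipate is the bookkeeping in the case analysis: for each residue class of $m$ modulo $8$ (and possibly modulo $16$ or $32$ in some sub-cases), one must trace the Thue-Morse values ${\bf t}_{m+j}$ for a range of small $j$ by repeatedly applying the halving identity ${\bf t}_{2i+1} = {\bf t}_{i+1}$, ${\bf t}_{2i} = \overline{{\bf t}_{i+1}}$, being careful that the shift by $m$ (odd) alternates the parity structure in a way that depends on $m \bmod 2^s$ for the relevant $s$. The hypothesis $m \not\equiv 1 \pmod 8$ is presumably exactly what is needed to rule out the one residue class where the first few coincidences fail and a larger $j$ (or a different argument) would be required — that excluded case is likely deferred to a subsequent lemma. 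I would organize the proof as: (1) dispose of $\mathfrak K(m) \leq 2^\ell+1$ and small $m$; (2) invoke Lemma \ref{Lem3}; (3) for each of the allowed residues of $m \bmod 8$, produce an explicit $j \leq 36$ with the required letter-pair coincidence, verifying it via the Thue-Morse recursions; (4) apply Lemma \ref{Lem4} with that $j$ to conclude $\mathfrak K(m) < (1 + \frac{j+1}{m})2^\ell \leq (1 + \frac{37}{m})2^\ell$.
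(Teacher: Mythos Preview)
Your overall plan matches the paper's proof closely: assume $\mathfrak K(m)\geq\left(1+\frac{37}{m}\right)2^\ell$, invoke Lemma~\ref{Lem3}, and then for most residues exhibit a small $j$ with ${\bf t}_j{\bf t}_{j+1}={\bf t}_{m+j}{\bf t}_{m+j+1}$ so that Lemma~\ref{Lem4} applies. Two corrections are in order.

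First, a minor slip: since $m$ is odd, the relevant residues modulo $8$ are only $\{3,5,7\}$, not $\{0,2,3,4,5,6,7\}$. The paper handles $m\equiv 3\pmod 4$ (covering $3$ and $7$ mod $8$) uniformly with $j=4$, and then treats $m\equiv 5\pmod 8$ by writing the binary expansion of $m$ as $x01^s01$ and splitting on $s$.

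Second, and more substantively, your step (3) does not go through uniformly. When $m\equiv 29\pmod{32}$ (equivalently $s\geq 3$ above), the paper does \emph{not} produce a $j\leq 36$ with ${\bf t}_j{\bf t}_{j+1}={\bf t}_{m+j}{\bf t}_{m+j+1}$ and apply Lemma~\ref{Lem4}. Instead it exhibits a coincidence between positions shifted by $m$ and by $2m$: writing $m=32n-3$ and counting ones in binary expansions, one gets ${\bf t}_{m+18}{\bf t}_{m+19}{\bf t}_{m+20}={\bf t}_{2m+18}{\bf t}_{2m+19}{\bf t}_{2m+20}$, hence $\mu^{\ell-1}$ of these three-letter words coincide, and a covering argument over $r\in\{9,\ldots,17\}$ (using $\bigcup_{r=9}^{17}(17/2r,\,10/(r+1))=(1/2,1)$) yields $\mathfrak K(m)\leq 2^\ell+18$ directly. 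This bypasses Lemma~\ref{Lem4} entirely for that residue class. Your sketch does not anticipate this twist, and it is not clear that the ``find $j\leq 36$'' approach can be pushed through when $s\geq 3$, since the required $j$ would then depend on an unbounded number of further bits of $m$.
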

\begin{proof}
Suppose instead that $\mathfrak K(m)\geq\left(1+\frac{37}{m}\right) 2^\ell$. Let us assume for the moment that $m\not\equiv 29\pmod{32}$. We will obtain a contradiction to Lemma \ref{Lem4} by exhibiting a positive integer $j\leq 36$ such that ${\bf t}_j{\bf t}_{j+1}={\bf t}_{m+j}{\bf t}_{m+j+1}$. Because $\mathfrak K(m)>2^\ell+1$, Lemma \ref{Lem3} tells us that ${\bf t}_{m+1}{\bf t}_{m+2}=11$ and ${\bf t}_{2m+1}{\bf t}_{2m+2}=10$. 

First, assume $m\equiv 3\pmod 4$. We have $\langle m+2,m+5\rangle=\mu^2({\bf t}_{(m+5)/4})$, so either $\langle m+2,m+5\rangle=0110$ or $\langle m+2,m+5\rangle=1001$. Since ${\bf t}_{m+2}=1$, we must have $\langle m+2,m+5\rangle=1001$. This shows that ${\bf t}_{m+4}{\bf t}_{m+5}=01={\bf t}_4{\bf t}_5$, so we may set $j=4$. 

Next, assume $m\equiv 5\pmod 8$. Let $x01^s01$ be the binary expansion of $m$, where $x$ is some (possibly empty) string of $0$'s and $1$'s. As $m\equiv 5\pmod 8$ and $m\not\equiv 29\pmod {32}$, we must have $1\leq s\leq 2$. Because ${\bf t}_{m+1}=1$, the number of $1$'s in the binary expansion of $m$ is odd. This means that the parity of the number of $1$'s in $x$ is the same as the parity of $s$.

Suppose $s=1$. The binary expansion of $m+3$ is the string $x1000$, which contains an even number of $1$'s. As a consequence, ${\bf t}_{m+4}=0$. The binary expansion of $m+4$ is $x1001$, so ${\bf t}_{m+5}=1$. This shows that ${\bf t}_{m+4}{\bf t}_{m+5}=01={\bf t}_4{\bf t}_5$, so we may set $j=4$. 

Suppose that $s=2$ and that $x$ ends in a $0$, say $x=y0$. Note that $y$ contains an even number of $1$'s. The binary expansions of $m+19$ and $m+20$ are $y100000$ and $y100001$, respectively, so ${\bf t}_{m+20}{\bf t}_{m+21}=10={\bf t}_{20}{\bf t}_{21}$. We may set $j=20$ in this case.

Assume now that $s=2$ and that $x$ ends in a $1$. Let us write $x=x'01^{s'}$, where $x'$ is a (possibly empty) binary string. For this last step, we may need to add additional $0$'s to the beginning of $x$. Doing so does not raise any issues because it does not change the number of $1$'s in $x$. The binary expansion of $m$ is $x'01^{s'}01101$. Note that the parity of the number of $1$'s in $x'$ is the same as the parity of $s'$. The binary expansions of $m+19$ and $m+35$ are $x'10^{s'+5}$ and $x'10^{s'}10000$, respectively. If $s'$ is even, then we may put $j=20$ because ${\bf t}_{m+20}{\bf t}_{m+21}=10={\bf t}_{20}{\bf t}_{21}$. If $s'$ is odd, then we may set $j=36$ because ${\bf t}_{m+36}{\bf t}_{m+37}=10={\bf t}_{36}{\bf t}_{37}$.     

We now handle the case in which $m\equiv 29\pmod {32}$. Say $m=32n-3$. Let $b$ be the number of $1$'s in the binary expansion of $n$. The binary expansion of $m+17=32n+14$ has $b+3$ $1$'s. Similarly, the binary expansions of $m+18$, $m+19$, $2m+17$, $2m+18$, and $2m+19$ have $b+4$, $b+1$, $b+3$, $b+2$, and $b+3$ $1$'s, respectively. This means that ${\bf t}_{m+18}{\bf t}_{m+19}{\bf t}_{m+20}={\bf t}_{2m+18}{\bf t}_{2m+19}{\bf t}_{2m+20}$. Therefore, \[\langle (m+17)2^{\ell-1}+1,(m+20)2^{\ell-1}\rangle=\mu^{\ell-1}({\bf t}_{m+18}{\bf t}_{m+19}{\bf t}_{m+20})\]
\begin{equation}\label{Eq5}
=\mu^{\ell-1}({\bf t}_{2m+18}{\bf t}_{2m+19}{\bf t}_{2m+20})=\langle (2m+17)2^{\ell-1}+1,(2m+20)2^{\ell-1}\rangle.
\end{equation} We have $\displaystyle{\bigcup_{r=9}^{17}}\left(\dfrac{17}{2r},\dfrac{10}{r+1}\right)=\left(\dfrac{1}{2},1\right)$, so there exists some $r\in\{9,10,\ldots,17\}$ such that $\dfrac{17}{2r}<\dfrac{m}{2^\ell}<\dfrac{10}{r+1}$. Equivalently, $17\cdot 2^{\ell-1}<rm<(r+1)m<20\cdot 2^{\ell-1}$. It follows that  there are nonempty words $w$ and $z$ such that $\langle (m+17)2^{\ell-1}+1,(m+20)2^{\ell-1}\rangle=w\langle (r+2^{\ell-1})m+1,(r+2^{\ell-1}+1)m\rangle z$. Similarly, there are nonempty words $w'$ and $z'$ such that $\langle (2m+17)2^{\ell-1}+1,(2m+20)2^{\ell-1}\rangle=w'\langle (r+2^{\ell})m+1,(r+2^{\ell}+1)m\rangle z'$. Note that $\vert w\vert=rm-17\cdot 2^{\ell-1}=\vert w'\vert$. Invoking \eqref{Eq5} yields $\langle (r+2^{\ell-1})m+1,(r+2^{\ell-1}+1)m\rangle=\langle (r+2^{\ell})m+1,(r+2^{\ell}+1)m\rangle$. This shows that $\mathfrak K(m)\leq r+2^\ell+1\leq 2^\ell+18$, securing our final contradiction to the assumption that $\mathfrak K(m)\geq \left(1+\frac{37}{m}\right)2^\ell$.   
\end{proof}

\begin{lemma}\label{Lem8}
Let $m$ be an odd positive integer, and let $\ell=\lceil\log_2 m\rceil$. Suppose $m=2^Lh+1$, where $L$ and $h$ are integers with $L\geq 3$ and $h$ odd. We have $\mathfrak K(m)<\left(1+\dfrac{2^{L+1}+4}{m}\right)2^\ell$.  
\end{lemma}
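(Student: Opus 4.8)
\textbf{Proof proposal for Lemma \ref{Lem8}.}

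The plan is to mimic the strategy of Lemma \ref{Lem5}: assuming for contradiction that $\mathfrak K(m)\geq\left(1+\frac{2^{L+1}+4}{m}\right)2^\ell$, I will produce a positive integer $j\leq 2^{L+1}+3$ with ${\bf t}_j{\bf t}_{j+1}={\bf t}_{m+j}{\bf t}_{m+j+1}$ and invoke Lemma \ref{Lem4} to contradict the assumption. The hypothesis $m\equiv 1\pmod 8$ is exactly the case left open by Lemma \ref{Lem5}, so here we genuinely exploit the structure $m=2^Lh+1$ with $h$ odd. Since $\mathfrak K(m)>2^\ell+1$, Lemma \ref{Lem3} applies and gives us ${\bf t}_{m+1}{\bf t}_{m+2}=11$ and ${\bf t}_{2m+1}{\bf t}_{2m+2}=10$; in particular the number of $1$'s in the binary expansion of $m$, which is $(\text{number of }1\text{'s in }h)+1$, is odd, so $h$ has an even number of $1$'s in binary. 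This parity fact will be the lever for all the digit computations.

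Next I would work with the binary expansion of $m=2^Lh+1$, which looks like (binary of $h$) followed by $(L-1)$ zeros followed by a $1$. The key additions to consider are those that perturb only the low-order block: adding a small number $c$ with $0\le c\le 2^L$ to $m$ changes the trailing $0^{L-1}1$ block without touching the bits of $h$, so the parity of the number of $1$'s in $m+c$ is the parity of (even) $+$ (number of $1$'s in the binary expansion of $2^{L-1}+1+c$ truncated to $L+1$ bits, roughly). Concretely, I expect to take $j$ around $2^L$: for instance $m+2^L$ has binary expansion (binary of $h+1$)$\,0^L$, and $h+1$ is even, so one can track carries into $h$. The delicate point is that adding $2^L$ to $m$ may produce carries propagating through a run of $1$'s at the bottom of $h$; writing $h=h'01^s$ (with $s\ge 0$) and splitting on the parity of $s$, the carry lands predictably and one reads off ${\bf t}_{m+j}$, ${\bf t}_{m+j+1}$ for a suitable $j\in\{2^L, 2^L\pm\text{small}\}$, matching the known short prefix ${\bf t}_j{\bf t}_{j+1}$ of ${\bf t}$ (which is periodic enough in its low positions to be computed directly). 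When the naive choice fails because $s$ has the wrong parity, I would instead use $j$ near $2^{L+1}$, exactly as the $s'$-parity split near $j=20,36$ was handled in Lemma \ref{Lem5}; this is why the bound $2^{L+1}+3$ appears rather than $2^L+O(1)$.

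The main obstacle, as in Lemma \ref{Lem5}, is a small handful of residual cases — the analogue of the $m\equiv 29\pmod{32}$ case — where no $j$ in the desired range gives a length-$2$ match, and one must instead find a length-$3$ match ${\bf t}_{m+j}{\bf t}_{m+j+1}{\bf t}_{m+j+2}={\bf t}_{2m+j}{\bf t}_{2m+j+1}{\bf t}_{2m+j+2}$ between the $m$-shifted and $2m$-shifted windows and then pass through $\mu^{\ell-1}$, using an interval-covering argument $\bigcup_r\left(\frac{a}{2r},\frac{a+3}{2(r+1)}\right)=\left(\frac12,1\right)$ to locate an appropriate factor $\langle rm+1,(r+1)m\rangle$ straddling the matched block. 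I expect these exceptional residues to be governed by the low bits of $h$ (e.g. $h\equiv 3\pmod 4$ or similar), and handling the carry bookkeeping there cleanly — keeping all the indices below $2^{L+1}+3$ so that Lemma \ref{Lem4} still yields the stated bound — is the part that will require the most care. Once every residue class of $m$ modulo a suitable power of $2$ (depending on $L$) is dispatched, combining with Lemma \ref{Lem4} gives $\mathfrak K(m)<\left(1+\frac{2^{L+1}+4}{m}\right)2^\ell$ in all cases.
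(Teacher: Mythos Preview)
Your plan is essentially the paper's own approach: assume the bound fails, use Lemma~\ref{Lem4}, and produce a $j\le 2^{L+1}+3$ with ${\bf t}_j{\bf t}_{j+1}={\bf t}_{m+j}{\bf t}_{m+j+1}$ via a binary-expansion computation and a single parity split. Your observation that Lemma~\ref{Lem3} forces the digit sum of $m$ to be odd, so that the parity of $s$ (the length of the trailing run of $1$'s in $h$) agrees with the parity of the digit sum of the top part of $h$, is exactly the mechanism the paper exploits (the paper phrases the split in terms of $N$, the number of $1$'s in the prefix $x$ of the expansion $m=x01^s0^{L-1}1$, but this is the same dichotomy).

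Where you diverge from the paper is in anticipating too much difficulty. There are \emph{no} residual cases here analogous to the $m\equiv 29\pmod{32}$ branch of Lemma~\ref{Lem5}, and no length-$3$ match or interval-covering argument is needed. The paper simply takes $j=2^L+3$ in one parity case and $j=2^{L+1}+3$ in the other: adding $2^L+2$ (resp.\ $2^{L+1}+2$) to $m$ carries cleanly through the block $1^s$, giving binary expansions $x10^{s+L-2}11$ and $x10^{s-1}10^{L-2}11$, from which ${\bf t}_{m+j}{\bf t}_{m+j+1}$ is read off directly and matched against the fixed two-letter factor ${\bf t}_{2^L+3}{\bf t}_{2^L+4}={\bf t}_{2^{L+1}+3}{\bf t}_{2^{L+1}+4}$. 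So your outline is correct, but you should drop the planned ``exceptional residue'' branch entirely; once you commit to the two specific values $j=2^L+3$ and $j=2^{L+1}+3$, the computation is short and uniform in $L\ge 3$.
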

\begin{proof}
\vspace{-.5cm}
Suppose instead that $\mathfrak K(m)\geq\left(1+\dfrac{2^{L+1}+4}{m}\right)2^\ell$. We will obtain a contradiction to Lemma \ref{Lem4} by finding a positive integer $j\leq 2^{L+1}+3$ satisfying ${\bf t}_{j}{\bf t}_{j+1}={\bf t}_{m+j}{\bf t}_{m+j+1}$. Let $x01^s0^{L-1}1$ be the binary expansion of $m$, and note that $s\geq 1$. Let $N$ be the number of $1$'s in $x$. The binary expansions of $m+2^L+2$, $m+2^L+3$, $m+2^{L+1}+2$, and $m+2^{L+1}+3$ are $x10^{s+L-2}11$, $x10^{s+L-3}100$, $x10^{s-1}10^{L-2}11$, and $x10^{s-1}10^{L-3}100$. This shows that ${\bf t}_{m+2^L+3}{\bf t}_{m+2^L+4}=10$ if $N$ is even and ${\bf t}_{m+2^{L+1}+3}{\bf t}_{m+2^{L+1}+4}=10$ if $N$ is odd. Observe that ${\bf t}_{2^L+3}{\bf t}_{2^L+4}={\bf t}_{2^{L+1}+3}{\bf t}_{2^{L+1}+4}=10$. Therefore, we may put $j=2^L+3$ if $N$ is even and $j=2^{L+1}+3$ if $N$ is odd.   
\end{proof}

\begin{lemma}\label{Lem6}
Let $m$ be an odd positive integer, and let $\ell=\lceil\log_2 m\rceil$. Assume $m=2^Lh+1$ for some integers $L$ and $h$ with $L\geq 3$ and $h$ odd. If $n$ is an integer such that $2\leq n\leq 2^{L-1}$, ${\bf t}_{m-n}={\bf t}_{m-n+1}$, and $m\leq\left(1-\frac{1}{2n+2}\right)2^\ell$, then $\mathfrak K(m)\leq 2^\ell-n$. 
\end{lemma}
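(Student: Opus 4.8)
The plan is to exhibit two distinct indices $r<s$ with $s\le 2^\ell-n-1$ for which the length-$m$ factors of $\mathbf t$ beginning at positions $rm+1$ and $sm+1$ coincide; since this means the prefix of $\mathbf t$ of length $(s+1)m$ is not an $(s+1)$-anti-power, Definition \ref{Def3} gives $\mathfrak K(m)\le s+1\le 2^\ell-n$. The setup is the block decomposition from Fact \ref{Fact1}: the prefix of $\mathbf t$ of length $2^\ell m$ equals $\mu^\ell(\langle 1,m\rangle)$, hence is a concatenation $C_1C_2\cdots C_m$ of blocks $C_i=\mu^\ell({\bf t}_i)\in\{A_\ell,B_\ell\}$ of length $2^\ell$. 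The hypothesis ${\bf t}_{m-n}={\bf t}_{m-n+1}$ says $C_{m-n}=C_{m-n+1}$, so $C_{m-n}C_{m-n+1}$ is a square; using $A_\ell=A_{\ell-1}B_{\ell-1}$ and $B_\ell=B_{\ell-1}A_{\ell-1}$ one sees this square contains, straddling its midpoint (the boundary between $C_{m-n}$ and $C_{m-n+1}$, i.e.\ position $(m-n)2^\ell$ of $\mathbf t$), a length-$2^\ell$ occurrence of the \emph{opposite} block --- $B_\ell$ if ${\bf t}_{m-n}=0$ and $A_\ell$ if ${\bf t}_{m-n}=1$.

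I then take $s=\big\lfloor (m-n)2^\ell/m\big\rfloor$, the index of the $m$-window that straddles position $(m-n)2^\ell$. Since $2^{\ell-1}<m<2^\ell$, a direct estimate gives $2^\ell-2n\le s\le 2^\ell-n-1$, which is already the bound we need. The hypothesis $m\le\bigl(1-\tfrac1{2n+2}\bigr)2^\ell$, equivalently $(2^\ell-m)(n+1)\ge 2^{\ell-1}$, is the quantitative input that (together with the estimates on $n$) lets one check that $\langle sm+1,(s+1)m\rangle$ lies \emph{entirely} within the occurrence of the opposite block found above: it controls how far the window extends past the midpoint on either side. So $\langle sm+1,(s+1)m\rangle$ is a factor of $B_\ell$ (resp.\ $A_\ell$), occupying a fixed set of positions $[p_0,p_0+m-1]$ inside that block.

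Finally I produce an earlier matching window. Put $r=s-2^{\ell-1}$; since $s\ge 2^\ell-2n$ and $n\le 2^{L-1}\le 2^{\ell-2}$ (the last step because $2^L\le m-1<2^\ell$), we have $r\ge 0$. As $m$ is odd, $2^{\ell-1}m\equiv 2^{\ell-1}\pmod{2^\ell}$, so $rm$ and $sm$ have residues mod $2^\ell$ differing by $2^{\ell-1}$; a short calculation then shows $\langle rm+1,(r+1)m\rangle$ begins at the very same offset $p_0$ inside the block $C_i$ containing it, where $i=(m-2n+1)/2$, and that it fits entirely inside $C_i$ for the same reason window $s$ fit inside the opposite block. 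Hence the two windows are factors of $C_i$ and of $B_\ell$ (resp.\ $A_\ell$) at the identical positions $[p_0,p_0+m-1]$, so they coincide \emph{provided} $C_i$ is that opposite block, i.e.\ provided ${\bf t}_i\ne{\bf t}_{m-n}$. Verifying this is the crux: writing $m-n=2k$ (even, since ${\bf t}_{m-n}={\bf t}_{m-n+1}$ and ${\bf t}_{2j-1}\ne{\bf t}_{2j}$) we have $\overline{{\bf t}_{m-n}}={\bf t}_k$, so it suffices to show ${\bf t}_{(m-2n+1)/2}={\bf t}_{(m-n)/2}$, which one settles by comparing the parities of the binary digit sums of $(m-2n-1)/2$ and $(m-n-2)/2$, using $m\equiv 1\pmod{2^L}$ with $L\ge 3$ and $2\le n\le 2^{L-1}$ to pin down the relevant binary expansions. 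I expect this last verification --- determining the value of the block $C_i$ --- to be the main obstacle, since it is the only place where the three hypotheses on $n$ must be used together. Granting it, $\langle rm+1,(r+1)m\rangle=\langle sm+1,(s+1)m\rangle$ with $0\le r<s\le 2^\ell-n-1$, so $\mathfrak K(m)\le s+1\le 2^\ell-n$.
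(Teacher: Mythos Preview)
Your overall architecture coincides with the paper's: the two windows you seek differ by $2^{\ell-1}$, and your crucial identity ${\bf t}_{(m-2n+1)/2}\ne{\bf t}_{m-n}$ is exactly the paper's ${\bf t}_{m-2n}=\overline a$ (since ${\bf t}_{m-2n}={\bf t}_{(m-2n+1)/2}$ by the doubling rule). The gap is your fitting claim: it is \emph{false} that $\langle sm+1,(s+1)m\rangle$ always lies inside the length-$2^\ell$ opposite block. Take $m=25$, $L=3$, $n=3$, so $\ell=5$; all hypotheses hold (${\bf t}_{22}={\bf t}_{23}=1$ and $25\le 28=(7/8)\cdot 32$). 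Then $s=\lfloor 22\cdot 32/25\rfloor=28$, the window occupies positions $701$--$725$, while the opposite block occupies only $689$--$720$: the window overshoots by five. Likewise your window $r=12$ occupies $301$--$325$ while $C_{10}$ occupies $289$--$320$. So neither window fits in a single $2^\ell$-block, and your matching argument collapses. The bound $m\le(1-\tfrac1{2n+2})2^\ell$ does not rescue this; what you would need is both $\alpha:=(m-n)2^\ell-sm\le 2^{\ell-1}$ and $m-\alpha\le 2^{\ell-1}$, and the second fails here ($\alpha=4<9=m-2^{\ell-1}$).

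The paper repairs this by enlarging the two matching segments from length $2^\ell$ to $3\cdot 2^{\ell-1}$. Using $m\equiv 1\pmod{2^L}$ and $n\le 2^{L-1}$ it checks directly from binary expansions that ${\bf t}_{m-2n}={\bf t}_{2m-2n}$ and ${\bf t}_{m-2n+2}={\bf t}_{2m-2n+2}$; together with ${\bf t}_{m-n}={\bf t}_{m-n+1}=a$ and cube-freeness this forces
\[
{\bf t}_{m-2n}{\bf t}_{m-2n+1}{\bf t}_{m-2n+2}={\bf t}_{2m-2n}{\bf t}_{2m-2n+1}{\bf t}_{2m-2n+2}=\overline a\,a\,\overline a.
\]
Applying $\mu^{\ell-1}$ yields two identical segments of length $3\cdot 2^{\ell-1}$, and a covering argument $\bigl(2^{\ell-1},(1-\tfrac1{2n+2})2^\ell\bigr]\subseteq\bigcup_{r=n}^{2n-1}\bigl[\tfrac{2n-2}{r}2^{\ell-1},\tfrac{2n+1}{r+1}2^{\ell-1}\bigr]$ shows that some window with index $2^{\ell-1}-r-1$ (for an $r\in\{n,\dots,2n-1\}$) fits inside the first segment, whence window $2^\ell-r-1$ fits inside the second at the same offset. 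Your block $C_i$ and your opposite block are precisely the first two-thirds of these longer segments, which is why your version is one half-block too short.
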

\begin{proof}
\vspace{-.5cm} 
Let $y$ and $z$ be the binary expansions of $2^{L-1}-n$ and $2^{L-1}-n+1$, respectively. If necessary, let the strings $y$ and $z$ begin with additional $0$'s so that $\vert y\vert=\vert z\vert=L-1$. Let $x10^L$ be the binary expansion of $m-1$. The binary expansions of $m-2n-1$ and $2m-2n-1$ are $x0y0$ and $x01y1$, respectively. The quantities of $1$'s in these strings are of the same parity, so ${\bf t}_{m-2n}={\bf t}_{2m-2n}$. Similarly, ${\bf t}_{m-2n+2}={\bf t}_{2m-2n+2}$ because the binary expansions of $m-2n+1$ and $2m-2n+1$ are $x0z0$ and $x01z1$, respectively. Let $a={\bf t}_{m-n}$. Because ${\bf t}_{m-n}={\bf t}_{m-n+1}$ by hypothesis, we have ${\bf t}_{2m-2n}={\bf t}_{2m-2n+2}=\overline{a}$. Therefore, ${\bf t}_{m-2n}={\bf t}_{m-2n+2}=\overline{a}$. The word $\bf t$ is cube-free, so ${\bf t}_{m-2n}{\bf t}_{m-2n+1}{\bf t}_{m-2n+2}=\overline aa\overline a={\bf t}_{2m-2n}{\bf t}_{2m-2n+1}{\bf t}_{2m-2n+2}$. Hence, 
\[\langle (m-2n-1)2^{\ell-1}+1,(m-2n+2)2^{\ell-1}\rangle=\mu^{\ell-1}({\bf t}_{m-2n}{\bf t}_{m-2n+1}{\bf t}_{m-2n+2})\]
\begin{equation}\label{Eq6}
=\mu^{\ell-1}({\bf t}_{2m-2n}{\bf t}_{2m-2n+1}{\bf t}_{2m-2n+2})=\langle (2m-2n-1)2^{\ell-1}+1,(2m-2n+2)2^{\ell-1}\rangle.
\end{equation}  

Now, $m\in\left(2^{\ell-1},\left(1-\dfrac{1}{2n+2}\right)2^\ell\right]\displaystyle{\subseteq\bigcup_{r=n}^{2n-1}\left[\dfrac{2n-2}{r}2^{\ell-1},\dfrac{2n+1}{r+1}2^{\ell-1}\right]}$, so there is some $r\in$ \\$\{n,n+1,\ldots,2n-1\}$ such that $\dfrac{2n-2}{r}2^{\ell-1}\leq m\leq\dfrac{2n+1}{r+1}2^{\ell-1}$. Equivalently, $(m-2n-1)2^{\ell-1}\leq(2^{\ell-1}-r-1)m< (2^{\ell-1}-r)m\leq(m-2n+2)2^{\ell-1}$. We find that \[\langle (m-2n-1)2^{\ell-1}+1,(m-2n+2)2^{\ell-1}\rangle=w\langle (2^{\ell-1}-r-1)m+1,(2^{\ell-1}-r)m\rangle z\] and \[\langle (2m-2n-1)2^{\ell-1}+1,(2m-2n+2)2^{\ell-1}\rangle=w'\langle (2^{\ell}-r-1)m+1,(2^{\ell}-r)m\rangle z'\] for some words $w,w',z,z'$. Because $\vert w\vert=(2n+1)2^{\ell-1}-(r+1)m=\vert w'\vert$, we may use \eqref{Eq6} to deduce that \[\langle (2^{\ell-1}-r-1)m+1,(2^{\ell-1}-r)m\rangle=\langle (2^{\ell}-r-1)m+1,(2^{\ell}-r)m\rangle.\] This shows that $\mathfrak K(m)\leq 2^\ell-r\leq 2^\ell-n$ as desired. 
\end{proof}

\begin{lemma}\label{Lem7}
If $m$ is an odd positive integer and $\ell=\lceil\log_2 m\rceil$, then $\mathfrak K(m)<2^\ell+2^{(\ell+5)/2}+10$. 
\end{lemma}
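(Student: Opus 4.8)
The plan is to prove Lemma \ref{Lem7} by splitting the odd positive integers $m$ into cases according to their residue modulo powers of $2$, and in each case invoking one of the previously established bounds (Lemmas \ref{Lem5}, \ref{Lem8}, \ref{Lem6}) on $\mathfrak K(m)$. The first observation is that if $m \not\equiv 1 \pmod 8$, then Lemma \ref{Lem5} gives $\mathfrak K(m) < (1 + 37/m)2^\ell = 2^\ell + 37 \cdot 2^\ell/m < 2^\ell + 74$ (using $m > 2^{\ell-1}$), which is comfortably below $2^\ell + 2^{(\ell+5)/2} + 10$ once $\ell$ is large enough; the finitely many small $m$ can be checked directly. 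So the entire content is the case $m \equiv 1 \pmod 8$, where we write $m = 2^L h + 1$ with $L \geq 3$ and $h$ odd.

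For $m \equiv 1 \pmod 8$, I would further subdivide based on the size of $L$ relative to $\ell$. If $L$ is small — say roughly $2^{L+1} \leq 2^{(\ell+5)/2}$, i.e. $L \leq (\ell+3)/2$ — then Lemma \ref{Lem8} immediately gives $\mathfrak K(m) < (1 + (2^{L+1}+4)/m)2^\ell = 2^\ell + (2^{L+1}+4)2^\ell/m < 2^\ell + 2(2^{L+1}+4) < 2^\ell + 2^{(\ell+5)/2} + 10$, again using $m > 2^{\ell-1}$. The remaining, genuinely delicate case is when $L$ is large, i.e. $m = 2^L h + 1$ with $L$ close to $\ell$ (so $h$ is small, essentially $h \in \{1, 3\}$ after accounting for the constraint $2^{\ell-1} < m < 2^\ell$). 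Here I would use Lemma \ref{Lem6}: I need to produce an integer $n$ with $2 \leq n \leq 2^{L-1}$ such that ${\bf t}_{m-n} = {\bf t}_{m-n+1}$ and $m \leq (1 - \frac{1}{2n+2})2^\ell$; then $\mathfrak K(m) \leq 2^\ell - n \leq 2^\ell$, which is more than good enough. The condition ${\bf t}_{m-n} = {\bf t}_{m-n+1}$ means $m - n$ is even (since ${\bf t}_{2j} = {\bf t}_{2j+1}$ fails... actually ${\bf t}_{2j+1} = {\bf t}_{j+1} \neq {\bf t}_{2j}$, so we need $m-n$ odd, i.e. $n$ even) together with a parity-of-binary-digits condition; concretely, I would look for $n$ of the form where $m - n$ and $m - n + 1$ straddle a position so that their binary expansions have the same number of $1$'s. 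Since $m = 2^L h + 1$, subtracting an appropriate $n$ of size around $2^{(\ell+5)/2}$ or smaller produces borrows that can be arranged to preserve the digit-sum parity; the size constraint $m \leq (1 - \frac{1}{2n+2})2^\ell$ is automatically satisfied when $m$ is bounded away from $2^\ell$, and when $m$ is very close to $2^\ell$ one needs $n$ correspondingly large, of order up to $2^{(\ell+1)/2}$ or so — which is exactly why the bound in the statement has a $2^{(\ell+5)/2}$ term rather than a constant.

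The main obstacle I anticipate is the bookkeeping in this last case: simultaneously satisfying the equality ${\bf t}_{m-n} = {\bf t}_{m-n+1}$ (a binary-digit-sum condition that depends subtly on the low-order bits of $m$, i.e. on $L$ and on whether $h = 1$ or $h = 3$, and on carry propagation when subtracting $n$) and the inequality $m \leq (1 - \frac{1}{2n+2})2^\ell$ (which forces $n$ to grow as $m \to 2^\ell$). One has to exhibit, for every such $m$, a single valid $n$; I expect this requires choosing $n$ from a short list of candidates of the form $2^a + c$ for small $c$ and checking that at least one works, with the threshold between candidates governed by how close $m/2^\ell$ is to $1$. A clean way to organize it: set $n$ to be essentially $\lceil \frac{1}{2}(\frac{2^\ell}{2^\ell - m} - 2)\rceil$ rounded to make $n$ even and to satisfy the digit condition, verify $n \leq 2^{L-1}$ (which holds because $m > 2^{\ell-1}$ forces $L$ and $\ell$ to be comparable when $h$ is small, and $2^\ell - m \geq 1$ bounds $n$ by roughly $2^{\ell-1}$), and confirm ${\bf t}_{m-n} = {\bf t}_{m-n+1}$ by a short case analysis on the binary carries. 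The small values of $m$ (below the point where the asymptotic inequalities kick in) are finite in number and handled by direct computation.
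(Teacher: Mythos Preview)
Your overall architecture matches the paper's: handle $m\not\equiv 1\pmod 8$ via Lemma~\ref{Lem5}, and for $m=2^Lh+1$ with $L\geq 3$, use Lemma~\ref{Lem8} when $L$ is small and Lemma~\ref{Lem6} when $L$ is large. The paper organizes the split slightly differently---it splits on whether $m>(1-\tfrac{1}{2^L-4})2^\ell$, shows this forces $L\leq(\ell+1)/2$ (whence Lemma~\ref{Lem8} applies), and otherwise applies Lemma~\ref{Lem6}---but your version is equivalent in spirit.

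The genuine gap is in your treatment of the Lemma~\ref{Lem6} step. Your parity analysis is backwards: since $\mathbf t$ is a word over $\{01,10\}$, we have $\mathbf t_{2i-1}\neq\mathbf t_{2i}$ always, so $\mathbf t_{m-n}=\mathbf t_{m-n+1}$ forces $m-n$ to be \emph{even}, not odd. More importantly, you are missing the one-line observation that makes this case trivial: $\mathbf t_{4j+2}=\mathbf t_{4j+3}$ for every $j\geq 0$ (the binary expansions of $4j+1$ and $4j+2$ have the same number of $1$'s). So to satisfy $\mathbf t_{m-n}=\mathbf t_{m-n+1}$ it suffices to take $m-n\equiv 2\pmod 4$; since $m\equiv 1\pmod 8$, this means $n\equiv 3\pmod 4$. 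The paper simply takes $n$ to be the largest such integer with $n\leq 2^{L-1}$, giving $n\geq 2^{L-1}-3$ and hence $2n+2\geq 2^L-4$, which is exactly the inequality needed. No carry analysis, no candidate lists, no rounding of $\tfrac{1}{2}(\tfrac{2^\ell}{2^\ell-m}-2)$ is required. Once you plug in this observation, the ``delicate case'' collapses to two lines, and the $L=3$ case is mopped up directly by Lemma~\ref{Lem8}.
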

\begin{proof}
We will assume that $m\geq 65$ (so $\ell\geq 7$). One may easily use a computer to check that the desired result holds when $m<65$. 

If $m\not\equiv 1\pmod 8$, then Lemma \ref{Lem5} tells us that \[\mathfrak K(m)<\left(1+\frac{37}{m}\right)2^\ell<2^\ell+74\leq 2^\ell+2^{(\ell+5)/2}+10.\] Suppose that $m\equiv 1\pmod 8$, and let $m=2^Lh+1$, where $L\geq 3$ and $h$ is odd. First, assume $m>\left(1-\dfrac{1}{2^{L}-4}\right)2^\ell$. Because $2^L\vert 2^\ell-m+1$ and $2^\ell-m+1>0$, we have $2^L\leq 2^\ell-m+1<\dfrac{2^\ell}{2^L-4}+1$. This implies that $2^{2L}-4\cdot2^L< 2^{\ell}+2^L-4$, so $2^L<2^{\ell-L}+5-4\cdot2^{-L}<2^{\ell-L+2}$. Hence,
$L\leq \dfrac{\ell+1}{2}$. By Lemma \ref{Lem8}, \[\mathfrak K(m)<\left(1+\dfrac{2^{L+1}+4}{m}\right)2^\ell<2^\ell+2^{L+2}+8< 2^\ell+2^{(\ell+5)/2}+10.\]      

Next, assume $m\leq\left(1-\dfrac{1}{2^{L}-4}\right)2^\ell$ and $L\geq 4$. Let $n$ be the largest integer satisfying $m-n\equiv 2\pmod 4$ and $n\leq 2^{L-1}$. Note that $m\leq \left(1-\dfrac{1}{2n+2}\right)2^\ell$ because $n\geq 2^{L-1}-3$. As $m-n\equiv 2\pmod 4$, we have ${\bf t}_{m-n}={\bf t}_{m-n+1}$. We have shown that $n$ satisfies the criteria specified in Lemma \ref{Lem6}, so $\mathfrak K(m)\leq 2^\ell-n<2^\ell+2^{(\ell+5)/2}+10$.

Finally, if $L=3$, then Lemma \ref{Lem8} tells us that \[\mathfrak K(m)<\left(1+\frac{20}{m}\right)2^\ell<2^\ell+40<2^\ell+2^{(\ell+5)/2}+10.\qedhere\]  
\end{proof}

At last, we are in a position to prove lower bounds for $\displaystyle{\liminf_{k\to\infty}(\gamma(k)/k)}$ and $\displaystyle{\limsup_{k\to\infty}(\gamma(k)/k)}$. 

\begin{theorem}\label{Thm3}
Let $\gamma(k)$ be as in Definition \ref{Def2}. We have \[\liminf_{k\to\infty}\frac{\gamma(k)}{k}\geq \frac{1}{2}\hspace{.5cm}\text{and}\hspace{.5cm}\limsup_{k\to\infty}\frac{\gamma(k)}{k}\geq 1.\]
\end{theorem}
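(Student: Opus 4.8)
The plan is to derive both lower bounds from the upper bound on $\mathfrak K(m)$ provided by Lemma~\ref{Lem7}, exploiting the relationship between $\mathfrak K$ and $\gamma$: for any $k$, if $\mathfrak K(m)\leq k$ then the prefix of $\bf t$ of length $km$ is \emph{not} a $k$-anti-power, so $m\notin\mathcal F(k)$. Equivalently, $\gamma(k)$ must exceed every odd $m$ with $\mathfrak K(m)\leq k$; more precisely, $\gamma(k)$ is the smallest odd $m$ such that $\mathfrak K(m')>k$ for \emph{every} odd $m'\leq m$ (using that $\mathcal F(k)$ consists of odd $m$ with the prefix of length $km$ being a $k$-anti-power, and the monotonicity in Remark~\ref{Rem1} is not needed here — what is needed is a counting/covering argument over a dyadic block of odd integers).

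\textbf{The $\liminf$ bound.} Fix $\ell$ and consider the odd integers $m$ in the range $2^{\ell-1}<m<2^\ell$. By Lemma~\ref{Lem7}, each such $m$ satisfies $\mathfrak K(m)<2^\ell+2^{(\ell+5)/2}+10$; call this quantity $k_\ell$. Hence for $k=k_\ell-1$ (or any $k\geq k_\ell$), \emph{every} odd $m$ with $2^{\ell-1}<m<2^\ell$ lies outside $\mathcal F(k)$, which forces $\gamma(k)\geq 2^\ell$ — wait, more carefully: it forces $\gamma(k)$ to be at least as large as the smallest odd integer \emph{not} covered, i.e.\ $\gamma(k_\ell)\geq 2^\ell+1$ roughly, since all odd $m<2^\ell$ with $m>2^{\ell-1}$ are excluded and we would also need the smaller ones excluded. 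Actually the cleanest route: by Lemma~\ref{Lem7} applied at every scale $\ell'\leq\ell$, every odd $m$ with $2<m<2^\ell$ has $\mathfrak K(m)<2^{\ell'}+2^{(\ell'+5)/2}+10\leq k_\ell$ where $\ell'=\lceil\log_2 m\rceil$. Thus for $k\geq k_\ell$ we get $\gamma(k)\geq 2^\ell$. Now given large $k$, choose $\ell$ maximal with $k_\ell\leq k$; then $k<k_{\ell+1}=2^{\ell+1}+2^{(\ell+6)/2}+10$, so $\gamma(k)/k\geq 2^\ell/(2^{\ell+1}+2^{(\ell+6)/2}+10)\to 1/2$ as $\ell\to\infty$. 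This yields $\liminf_{k\to\infty}\gamma(k)/k\geq 1/2$.

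\textbf{The $\limsup$ bound.} For the $\limsup$, the idea is to pick a sparse subsequence of values of $k$ where we can do better. Specifically, choose $k$ of the form $k=2^\ell+2^{(\ell+5)/2}+10$ (taking $\ell$ even so that $2^{(\ell+5)/2}$, while not an integer, can be replaced by an integer ceiling of it without affecting the asymptotics — one should state the subsequence precisely, say $k=2^\ell + \lceil 2^{(\ell+5)/2}\rceil+10$). Then by Lemma~\ref{Lem7} every odd $m<2^\ell$ has $\mathfrak K(m)<k$, but we want to know $\gamma(k)$ is actually close to $2^\ell$ (not much bigger). Hmm — Lemma~\ref{Lem7} gives upper bounds on $\mathfrak K$, which \emph{lower} bound $\gamma(k)$; to lower-bound $\gamma(k)$ by something close to $k$ itself we instead need that there is \emph{some} odd $m$ slightly below $2^\ell$ — indeed all of them up to just below $2^\ell$ — that gets excluded. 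The bound $\gamma(k)\geq 2^\ell$ combined with $k=2^\ell(1+o(1))$ gives $\gamma(k)/k\geq 1+o(1)$ along this subsequence, hence $\limsup_{k\to\infty}\gamma(k)/k\geq 1$. So the key is simply that along the subsequence $k_\ell - 1$ (just below the threshold where scale-$\ell$ integers all die), $k$ is asymptotic to $2^\ell$ while $\gamma(k)\geq 2^{\ell}$.

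\textbf{Main obstacle.} The genuinely delicate point is not the limiting arithmetic but ensuring the logical chain ``$\mathfrak K(m)<k$ for all odd $m$ in a dyadic-closed initial segment $\Rightarrow\gamma(k)\geq$ (top of that segment)'' is airtight, including handling the small cases $m\in\{1,3,5\}$ (where $\mathcal F(k)$ genuinely excludes them for $k\geq 3$, consistent with what we want) and verifying that Lemma~\ref{Lem7}'s bound $2^{\ell}+2^{(\ell+5)/2}+10$ is monotone enough in $\ell$ that the scale-$\ell'$ threshold for all $\ell'\leq\ell$ is dominated by the scale-$\ell$ threshold. I would also need to be careful that $\gamma(k)$ is the minimum of $\mathcal F(k)$, so $\gamma(k)\geq M$ means precisely that no odd integer in $\{1,3,\dots,M-2\}$ lies in $\mathcal F(k)$ — i.e.\ each such integer $m$ has its length-$km$ prefix failing to be a $k$-anti-power, which is exactly $\mathfrak K(m)\leq k$. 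Assembling this bookkeeping correctly, and choosing the explicit subsequence for the $\limsup$ so that the ratio provably tends to a limit $\geq 1$, is the main work; the rest is the routine estimate $2^\ell/(2^{\ell+1}+2^{(\ell+6)/2}+10)\to 1/2$.
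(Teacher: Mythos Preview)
Your proposal is correct and follows essentially the same approach as the paper: both bounds are extracted from Lemma~\ref{Lem7}'s inequality $\mathfrak K(m)<2^\ell+2^{(\ell+5)/2}+10$ via the equivalence $m\in\mathcal F(k)\Leftrightarrow k<\mathfrak K(m)$, and your $\limsup$ argument (pick the subsequence $k\approx 2^\ell+2^{(\ell+5)/2}+10$, conclude $\gamma(k)\geq 2^\ell+1$, take the ratio) matches the paper's exactly. The only notable difference is in the $\liminf$ step: you cover all dyadic scales $\ell'\leq\ell$ to force $\gamma(k)\geq 2^\ell$ and then sandwich $k$ between consecutive thresholds, whereas the paper applies Lemma~\ref{Lem7} \emph{once}, directly to $m=\gamma(k)$, using the tautology $k<\mathfrak K(\gamma(k))$ to get $\gamma(k)/k>2^{\ell-1}/(2^\ell+2^{(\ell+5)/2}+10)$ with $\ell=\lceil\log_2\gamma(k)\rceil$ --- this avoids the monotonicity-across-scales check and the small-$m$ bookkeeping you flagged, but both routes yield the same limit $1/2$.
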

\begin{proof}
For each positive integer $\ell$, let $g(\ell)=\lfloor2^\ell+2^{(\ell+5)/2}+10\rfloor+1$. Lemma \ref{Lem7} implies that $\mathfrak K(m)<g(\ell)$ for all odd positive integers $m<2^\ell$. It follows from the definition of $\gamma$ that $\gamma(g(\ell))\geq 2^\ell+1$. Therefore, \[\limsup_{k\to\infty}\frac{\gamma(k)}{k}\geq\limsup_{\ell\to\infty}\frac{\gamma(g(\ell))}{g(\ell)}\geq\lim_{\ell\to\infty}\frac{2^\ell+1}{2^\ell+2^{(\ell+5)/2}+11}=1.\]

Now, choose an arbitrary positive integer $k$, and let $\ell=\lceil\log_2(\gamma(k))\rceil$. By the definition of $\gamma$, $k<\mathfrak K(\gamma(k))$. We may use Lemma \ref{Lem7} to find that \[\frac{\gamma(k)}{k}>\frac{\gamma(k)}{2^\ell+2^{(\ell+5)/2}+10}>\frac{2^{\ell-1}}{2^\ell+2^{(\ell+5)/2}+10}.\] Note that this implies that $\gamma(k)\to\infty$ as $k\to\infty$. It follows that $\ell\to\infty$ as $k\to\infty$, so \[\liminf_{k\to\infty}\frac{\gamma(k)}{k}\geq\lim_{\ell\to\infty}\frac{2^{\ell-1}}{2^\ell+2^{(\ell+5)/2}+10}=\frac{1}{2}.\]  
\end{proof}

In our final theorem, we provide upper bounds for $\displaystyle{\liminf_{k\to\infty}(\gamma(k)/k)}$ and $\displaystyle{\limsup_{k\to\infty}(\gamma(k)/k)}$. This will complete our proof of all the asymptotic results mentioned in the introduction. Before proving this theorem, we need one lemma. In what follows, recall that the Thue-Morse word ${\bf t}$ is overlap-free. This means that if $a,b,n$ are positive integers satisfying $a<b\leq a+n$, then $\langle a,a+n\rangle\neq\langle b,b+n\rangle$.  

\begin{lemma}\label{Lem9}
For each integer $\ell\geq 3$, we have \[\mathfrak K(3\cdot 2^{\ell-2}+1)>\frac{5\cdot 2^{2\ell-3}}{3\cdot 2^{\ell-2}+1}\hspace{.5cm}\text{and}\hspace{.5cm}\mathfrak K(2^{\ell-1}+3)>\frac{2^{2\ell-2}}{2^{\ell-1}+3} .\]  
\end{lemma}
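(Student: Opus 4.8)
The lemma asserts two lower bounds for $\mathfrak{K}$, one for $m = 3\cdot 2^{\ell-2}+1$ and one for $m = 2^{\ell-1}+3$. Each says that certain prefixes of $\mathbf t$ of length $km$ (with $k$ slightly below $\tfrac{5}{3}\cdot 2^{\ell-2}$, resp. slightly below $2^{\ell-1}$) \emph{are} $k$-anti-powers, i.e. the blocks $\langle im+1,(i+1)m\rangle$ are pairwise distinct for $0\le i < k$. The natural strategy is the contrapositive: assume two blocks coincide, say $\langle n_1 m+1, (n_1+1)m\rangle = \langle n_2 m+1,(n_2+1)m\rangle$ with $0 \le n_1 < n_2 \le k-1$, and derive a contradiction with overlap-freeness of $\mathbf t$. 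Writing $a = n_1 m + 1$, $b = n_2 m + 1$, $n = m-1$, equality of these two blocks gives $\langle a, a+n\rangle = \langle b, b+n\rangle$; overlap-freeness forbids this precisely when $a < b \le a+n$, i.e. when $b - a = (n_2 - n_1)m \le n = m-1$, which is impossible since $n_2 > n_1$. So the na\"ive collision is \emph{never} directly an overlap — we need more.

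\textbf{Bringing in Proposition \ref{Prop1}.} The right tool is Corollary \ref{Cor1} combined with the divisibility statement of Proposition \ref{Prop1}(i): if $\langle n_1 m+1,(n_1+1)m\rangle = \langle n_2 m+1,(n_2+1)m\rangle$ then $yvy$ is a factor of $\mathbf t$ with $|y| = m$ and $|yv| = (n_2-n_1)m$, so $2^{\delta(m)}$ divides $(n_2-n_1)m$, hence (as $m$ is odd) $2^{\delta(m)}$ divides $n_2 - n_1$. Here $\delta(m) = \lceil \log_2(m/3)\rceil$. For $m = 3\cdot 2^{\ell-2}+1$ we get $\delta(m) = \ell-1$ (since $2^{\ell-1} < m/3 \cdot 3 = m$... more precisely $m/3$ lies in $(2^{\ell-3}, 2^{\ell-1}]$; one checks $\delta(m) = \ell-1$), so any collision forces $n_2 - n_1 \ge 2^{\ell-1}$, hence $k - 1 \ge n_2 \ge 2^{\ell-1}$, i.e. $\mathfrak K(m) \ge 2^{\ell-1}+1$. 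But the claimed bound is $\mathfrak K(m) > \tfrac{5\cdot 2^{2\ell-3}}{3\cdot 2^{\ell-2}+1}$, and $\tfrac{5\cdot 2^{2\ell-3}}{3\cdot 2^{\ell-2}+1} = \tfrac{5}{3}\cdot 2^{\ell-2}\cdot\tfrac{1}{1 + 2^{2-\ell}/3}$, which for large $\ell$ is close to $\tfrac{5}{3}\cdot 2^{\ell-2} \approx 0.833\cdot 2^{\ell-1}$ — strictly smaller than $2^{\ell-1}$. So in fact the crude Corollary \ref{Cor1} bound $\mathfrak K(m)\ge 2^{\ell-1}+1$ already beats the claimed one. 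The same phenomenon should occur for $m = 2^{\ell-1}+3$: there $m/3 \in (2^{\ell-3}, 2^{\ell-2}]$ for $\ell\ge 3$, giving $\delta(m) = \ell - 2$, hence $\mathfrak K(m) \ge 2^{\ell-2}+1$; and $\tfrac{2^{2\ell-2}}{2^{\ell-1}+3} = 2^{\ell-1}/(1 + 3\cdot 2^{1-\ell})$ is just under $2^{\ell-1}$, again larger than $2^{\ell-2}+1$... so here the crude bound is \emph{not} enough and we need the sharper $\delta$ once more — let me re-examine: actually for $m = 2^{\ell-1}+3$, $m/3$ is slightly above $2^{\ell-3}$, so $\lceil\log_2(m/3)\rceil = \ell-2$, and $2^{\ell-2}+1 < 2^{\ell-1}/(1+\text{small})$, so indeed Corollary \ref{Cor1} alone is insufficient here.

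\textbf{What to actually do, and the main obstacle.} So the proof must split: the $3\cdot 2^{\ell-2}+1$ bound follows immediately from Corollary \ref{Cor1} after the elementary check $\delta(3\cdot 2^{\ell-2}+1) = \ell-1$ and the arithmetic $2^{\ell-1}+1 > \tfrac{5\cdot 2^{2\ell-3}}{3\cdot 2^{\ell-2}+1}$ (equivalently $(2^{\ell-1}+1)(3\cdot 2^{\ell-2}+1) > 5\cdot 2^{2\ell-3}$, which expands to $3\cdot 2^{2\ell-3} + 2^{\ell-1} + 3\cdot 2^{\ell-2} + 1 > 5\cdot 2^{2\ell-3}$ — \emph{false} for large $\ell$!). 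That forces me to reconsider: evidently I am mis-evaluating $\delta$. Since $3\cdot 2^{\ell-2}+1$ divided by $3$ is $2^{\ell-2} + 1/3$, we get $\log_2(m/3) \in (\ell-2, \ell-1)$, so $\delta(m) = \ell - 1$... which is what I had, and the arithmetic genuinely fails. The resolution must be that one cannot merely invoke Corollary \ref{Cor1}; instead one needs a \emph{direct} overlap-freeness argument exploiting the special form of $m$, analogous to the collision-tracking arguments in Lemmas \ref{Lem5}--\ref{Lem7}. The plan is therefore: suppose a collision $\langle n_1 m +1,(n_1+1)m\rangle = \langle n_2 m+1,(n_2+1)m\rangle$ occurs with $n_2 \le k-1$ where $k-1 = \lfloor \tfrac{5\cdot 2^{2\ell-3}}{3\cdot 2^{\ell-2}+1}\rfloor$; use Proposition \ref{Prop1}(i) to get $2^{\ell-1} \mid n_2 - n_1$, hence $n_2 - n_1 \ge 2^{\ell-1}$, hence $k - 1 \ge 2^{\ell-1}$; and then the contradiction is with the \emph{size} constraint $k-1 < \tfrac{5\cdot 2^{2\ell-3}}{3\cdot 2^{\ell-2}+1} = \tfrac{5}{3}\cdot\tfrac{2^{2\ell-3}}{2^{\ell-2}+1/3}$. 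For this to be a contradiction we need $2^{\ell-1} \ge \tfrac{5\cdot 2^{2\ell-3}}{3\cdot 2^{\ell-2}+1}$, i.e. $2^{\ell-1}(3\cdot 2^{\ell-2}+1) \ge 5\cdot 2^{2\ell-3}$, i.e. $3\cdot 2^{2\ell-3} + 2^{\ell-1}\ge 5\cdot 2^{2\ell-3}$, i.e. $2^{\ell-1}\ge 2\cdot 2^{2\ell-3} = 2^{2\ell-2}$ — false. I conclude that the $\delta$-value I need must actually be $\ell$, not $\ell-1$, which would follow if the paper's Proposition \ref{Prop1} is applied not to $m$ but after a reduction; more likely the real argument tracks the collision down through $\mu$-preimages to reduce to a small base case (as in Lemma \ref{Lem6}), picking up an extra factor of $2$ in the period. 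So: \textbf{the plan} is to mimic the proof of Lemma \ref{Lem6} — given a hypothetical collision of $m$-blocks within the first $k$ blocks, descend via the structure $\mathbf t = \mu^{\ell-1}(\mathbf t)$ to relate it to a collision of $\mathbf t_j\mathbf t_{j+1}$-type pairs at small indices, then directly contradict overlap-freeness using the explicit binary expansions of $m = 3\cdot 2^{\ell-2}+1$ and $m = 2^{\ell-1}+3$ (both of which have very sparse, controlled binary expansions). \textbf{The main obstacle} is pinning down exactly which short factor equality at small index is forced and checking it genuinely violates overlap-freeness; the arithmetic bookkeeping (showing the relevant $r$ exists in the required range so that an $m$-block sits inside a $\mu^{\ell-1}$-image of a $2$- or $3$-letter factor) is the delicate part, exactly as in Lemmas \ref{Lem5} through \ref{Lem7}.
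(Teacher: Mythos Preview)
Your exploration correctly identifies the overall architecture---use Proposition~\ref{Prop1} to constrain the collision gap, then reduce via the $\mu$-structure to a forbidden short repetition---but two concrete ideas are missing, and without them the argument does not close.

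\textbf{The case split you never make.} After Proposition~\ref{Prop1} gives $2^{\delta(m)}\mid(n_2-n_1)$, the paper splits: either $n_2-n_1\ge 2^{\delta(m)+1}$, in which case $\mathfrak K(m)>2^{\delta(m)+1}$ already exceeds the claimed bound, or $n_2-n_1=2^{\delta(m)}$ \emph{exactly}. Only this minimal-gap case needs work. You circle around this but never isolate it, which is why you end up suspecting (incorrectly) that you need $\delta=\ell$.

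\textbf{First inequality ($m=3\cdot 2^{\ell-2}+1$, $\delta(m)=\ell-1$).} In the minimal-gap case the key arithmetic is $2^{\ell-1}m=3\cdot 2^{2\ell-3}+2^{\ell-1}$. Since $\mathbf t_1\mathbf t_2=\mathbf t_4\mathbf t_5=01$, the factors $\langle 1,2^{2\ell-2}\rangle$ and $\langle 3\cdot 2^{2\ell-3}+1,5\cdot 2^{2\ell-3}\rangle$ coincide. If $\mathfrak K(m)m\le 5\cdot 2^{2\ell-3}$, the second colliding block $v$ lies inside the latter; transporting it back via this equality shows $v=\langle n_1 m+2^{\ell-1}+1,(n_1+1)m+2^{\ell-1}\rangle$. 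But $u=\langle n_1m+1,(n_1+1)m\rangle=v$ with starting positions only $2^{\ell-1}<m$ apart---a genuine overlap, contradicting overlap-freeness. This ``period absorption'' step (the $3\cdot 2^{2\ell-3}$ disappears, leaving only $2^{\ell-1}$) is the heart of the argument and is absent from your plan.

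\textbf{Second inequality ($m'=2^{\ell-1}+3$, $\delta(m')=\ell-2$).} Here $2^{\ell-2}m'=2^{2\ell-3}+3\cdot 2^{\ell-2}$, and there is no analogous period to absorb. The paper's mechanism is genuinely different: one descends to the level-$(\ell-2)$ block decomposition, shows the collision forces $\mathbf t_q\mathbf t_{q+1}\mathbf t_{q+2}=\mathbf t_{q+m'}\mathbf t_{q+m'+1}\mathbf t_{q+m'+2}$ for a suitable small $q$, then uses $q+4<2^{\ell-1}$ and the binary expansion of $m'$ to deduce $\mathbf t_{q+3}\mathbf t_{q+4}\mathbf t_{q+5}=\overline{\mathbf t_q\mathbf t_{q+1}\mathbf t_{q+2}}$. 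This produces a factor $X\overline X$ with $|X|=3$, which cube-freeness forbids. Your stated plan (``contradict overlap-freeness using the explicit binary expansions'') does not anticipate this $X\overline X$ cube-freeness route, and the analogy with Lemma~\ref{Lem6} is too loose to reconstruct it.
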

\begin{proof}
Fix $\ell\geq 3$, and let $m=3\cdot 2^{\ell-2}+1$ and $m'=2^{\ell-1}+3$. By the definitions of $\mathfrak K(m)$ and $\mathfrak K(m')$, there are nonnegative integers $r<\mathfrak K(m)-1$ and $r'<\mathfrak K(m')-1$ such that $\langle rm+1,(r+1)m \rangle=\langle(\mathfrak K(m)-1)m+1,\mathfrak K(m)m\rangle$ and $\langle r'm'+1,(r'+1)m' \rangle=\langle(\mathfrak K(m')-1)m'+1,\mathfrak K(m')m'\rangle$. According to Proposition \ref{Prop1}, $2^{\ell-1}$ divides $(\mathfrak K(m)-1)m-rm$ and $2^{\ell-2}$ divides $(\mathfrak K(m')-1)m'-r'm'$. Since $m$ and $m'$ are odd, we know that $2^{\ell-1}$ divides $\mathfrak K(m)-r-1$ and $2^{\ell-2}$ divides $\mathfrak K(m')-r'-1$. If $\mathfrak K(m)-r-1\geq 2^\ell$, then $\mathfrak K(m)> \dfrac{5\cdot 2^{2\ell-3}}{3\cdot 2^{\ell-2}+1}$ as desired. Therefore, we may assume $\mathfrak K(m)=r+2^{\ell-1}+1$. By the same token, we may assume that $\mathfrak K(m')=r'+2^{\ell-2}+1$. 

With the aim of finding a contradiction, let us assume $\mathfrak K(m)\leq\dfrac{5\cdot 2^{2\ell-3}}{m}$. Put \[u=\langle rm+1,(r+1)m\rangle\hspace{.5cm}\text{snd}\hspace{.5cm}v=\langle(\mathfrak K(m)-1)m+1,\mathfrak K(m)m\rangle.\] We have \[\mu^{2\ell-3}(01)=\mu^{2\ell-3}({\bf t}_4{\bf t}_5)=\langle3\cdot 2^{2\ell-3}+1,5\cdot 2^{2\ell-3}\rangle=wvz\] for some words $w$ and $z$. Observe that $\vert w\vert=(\mathfrak K(m)-1)m-3\cdot 2^{2\ell-3}=rm+2^{\ell-1}$. Since $\mu^{2\ell-3}(01)=\mu^{2\ell-3}({\bf t}_1{\bf t}_2)=\langle 1,2^{2\ell-3}\rangle$, we have $v=\langle rm+2^{\ell-1}+1,(r+1)m+2^{\ell-1}\rangle$. If we set $a=rm+1$ and $b=rm+2^{\ell-1}+1$, then $a<b\leq a+m$. It follows from the fact that ${\bf t}$ is overlap-free that $u\neq v$. This is a contradiction. 

Assume now that $\mathfrak K(m')\leq\dfrac{2^{2\ell-2}}{m'}$. Let \[u'=\langle r'm'+1,(r'+1)m'\rangle\hspace{.5cm}\text{and}\hspace{.5cm}v'=\langle(\mathfrak K(m')-1)m'+1,\mathfrak K(m')m'\rangle.\] Let $q=\lceil(r'm'+1)/2^{\ell-2}\rceil$ and $H=\min\{(r'+1)m',(q+2)2^{\ell-2}\}$. Finally, put $U=\langle r'm'+1,H\rangle$ and $V=\langle (r'+2^{\ell-2})m'+1,H+2^{\ell-2}m'\rangle$. The word $U$ is the prefix of $u'$ of length $H-r'm'$. Because $\mathfrak K(m')=r'+2^{\ell-2}+1$, $V$ is the prefix of $v'$ of length $H-r'm'$. Since $u'=v'$, we must have $U=V$. 

There are words $w'$ and $z'$ such that \[\mu^{\ell-2}({\bf t}_{q}{\bf t}_{q+1}{\bf t}_{q+2})=\langle (q-1)2^{\ell-2}+1,(q+2)2^{\ell-2}\rangle=w'Uz'.\] Furthermore, \[\mu^{\ell-2}({\bf t}_{q+m'}{\bf t}_{q+m'+1}{\bf t}_{q+m'+2})=\langle (q+m'-1)2^{\ell-2}+1,(q+m'+2)2^{\ell-2}\rangle=w''Vz''\] for some words $w''$ and $z''$. Note that $0\leq\vert w'\vert=r'm'-(q-1)2^{\ell-2}=\vert w''\vert<2^{\ell-2}$ (the inequalities follow from the definition of $q$). The suffix of $\mu^{\ell-2}({\bf t}_q)$ of length $2^{\ell-2}-\vert w'\vert$ is a prefix of $U$. Similarly, the suffix of $\mu^{\ell-2}({\bf t}_{q+m'})$ of length $2^{\ell-2}-\vert w''\vert$ is a prefix of $V$. Since $\vert w'\vert=\vert w''\vert$ and $U=V$, we must have ${\bf t}_q={\bf t}_{q+m'}$. Similar arguments show that ${\bf t}_{q+1}={\bf t}_{q+m'+1}$ and ${\bf t}_{q+2}={\bf t}_{q+m'+2}$ (see Figure \ref{Fig2}). 

Now,\[r'=\mathfrak K(m')-2^{\ell-2}-1\leq\frac{2^{2\ell-2}}{m'}-2^{\ell-2}-1=\frac{2^{2\ell-3}-5\cdot 2^{\ell-2}-3}{m'},\] so $\dfrac{r'm'+1}{2^{\ell-2}}<2^{\ell-1}-5$. Therefore, $q+4<2^{\ell-1}$. It follows that for each $j\in\{0,1,2\}$, the binary expansion of $q+m'+j-1$ has exactly one more $1$ than the binary expansion of $q+j+2$. We find that ${\bf t}_{q+3}{\bf t}_{q+4}{\bf t}_{q+5}=\overline{{\bf t}_{q+m'}{\bf t}_{q+m'+1}{\bf t}_{q+m'+2}}=\overline{{\bf t}_{q}{\bf t}_{q+1}{\bf t}_{q+2}}$. However, utilizing the fact that ${\bf t}$ is cube-free, it is easy to check that $X\overline X$ is not a factor of ${\bf t}$ whenever $X$ is a word of length $3$. This yields a contradiction when we set $X=\overline{{\bf t}_{q}{\bf t}_{q+1}{\bf t}_{q+2}}$.   
\end{proof}

\begin{figure}[t]
\begin{tikzpicture}
\draw (0,0) -- (0,2.4) -- (16.5,2.4) -- (16.5,0) -- (0,0);
\draw (0,0.8) -- (16.5,0.8);
\draw (0,1.6) -- (16.5,1.6);
\draw (5.98,0) -- (5.98,2.4); 
\draw (10.52,0) -- (10.52,2.4);

\draw (1.993,0.8) -- (1.993,1.6);
\draw (3.986,0.8) -- (3.986,1.6); 

\draw (12.513,0.8) -- (12.513,1.6);
\draw (14.506,0.8) -- (14.506,1.6); 

\draw (1.1,0) -- (1.1,0.8);
\draw (5.1,0) -- (5.1,0.8); 

\draw (11.62,0) -- (11.62,0.8);
\draw (15.62,0) -- (15.62,0.8); 

\node at (2.999,2){$\langle(q-1)2^{\ell-2}+1,(q+2)2^{\ell-2}\rangle$};
\node at (1.02,1.2){$\mu^{\ell-2}({\bf t}_q)$};
\node at (3,1.2){$\mu^{\ell-2}({\bf t}_{q+1})$};
\node at (5,1.2){$\mu^{\ell-2}({\bf t}_{q+2})$};
\node at (0.575,0.43){$w'$};
\node at (3.08,0.4){$U$};
\node at (5.55,0.43){$z'$};

\node at (13.55,2){\footnotesize $\langle(q+m'-1)2^{\ell-2}+1,(q+m'+2)2^{\ell-2}\rangle$};
\node at (11.55,1.2){\tiny $\mu^{\ell-2}({\bf t}_{q+m'})$};
\node at (13.55,1.2){\tiny $\mu^{\ell-2}({\bf t}_{q+m'+1})$};
\node at (15.54,1.2){\tiny $\mu^{\ell-2}({\bf t}_{q+m'+2})$};
\node at (11.1,0.43){$w''$};
\node at (13.7,0.4){$V$};
\node at (16.1,0.43){$z''$};
\end{tikzpicture}
\captionof{figure}{An illustration of the proof of Lemma \ref{Lem9}.} \label{Fig2}
\end{figure}

\begin{theorem}\label{Thm4}
Let $\gamma(k)$ be as in Definition \ref{Def2}. We have \[\liminf_{k\to\infty}\frac{\gamma(k)}{k}\leq \frac{9}{10}\hspace{.5cm}\text{and}\hspace{.5cm}\limsup_{k\to\infty}\frac{\gamma(k)}{k}\leq\frac{3}{2}.\]
\end{theorem}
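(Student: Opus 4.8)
The plan rests on a single reduction: by Definitions \ref{Def2} and \ref{Def3} together with Remark \ref{Rem1}, an odd positive integer $m$ belongs to $\mathcal F(k)$ exactly when $k<\mathfrak K(m)$, so $\gamma(k)\le m$ for every odd $m$ with $\mathfrak K(m)>k$. Lemma \ref{Lem9} hands us, for each $\ell\ge 3$, two odd numbers with a lower bound on $\mathfrak K$: writing $m_\ell=3\cdot 2^{\ell-2}+1$ and $m_\ell'=2^{\ell-1}+3$, we have $\mathfrak K(m_\ell)>\dfrac{5\cdot 2^{2\ell-3}}{3\cdot 2^{\ell-2}+1}$ and $\mathfrak K(m_\ell')>\dfrac{2^{2\ell-2}}{2^{\ell-1}+3}$. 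Both assertions of the theorem will be obtained by feeding these two families into the inequality $\gamma(k)\le m$.

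For $\displaystyle{\liminf_{k\to\infty}}\,\gamma(k)/k\le 9/10$ I would use only the numbers $m_\ell$. For each $\ell\ge 3$, let $k_\ell=\left\lfloor\dfrac{5\cdot 2^{2\ell-3}}{3\cdot 2^{\ell-2}+1}\right\rfloor$, so that $k_\ell$ is less than the lower bound for $\mathfrak K(m_\ell)$; then $\mathfrak K(m_\ell)>k_\ell$, hence $\gamma(k_\ell)\le m_\ell$, and $k_\ell\to\infty$. Using $k_\ell>\dfrac{5\cdot 2^{2\ell-3}}{3\cdot 2^{\ell-2}+1}-1$ gives
\[\frac{\gamma(k_\ell)}{k_\ell}\le\frac{(3\cdot 2^{\ell-2}+1)^2}{5\cdot 2^{2\ell-3}-3\cdot 2^{\ell-2}-1},\]
whose limit as $\ell\to\infty$ is $\dfrac{9/4}{5/2}=\dfrac{9}{10}$; controlling the lower-order terms turns this into an honest inequality $\liminf_k\gamma(k)/k\le 9/10$.

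For $\displaystyle{\limsup_{k\to\infty}}\,\gamma(k)/k\le 3/2$ I would argue directly for every sufficiently large $k$. Let $\ell$ be the integer with $2^{\ell-1}\le k<2^\ell$, and split $[2^{\ell-1},2^\ell)$ into the three subintervals $\bigl[2^{\ell-1},A\bigr)$, $\bigl[A,B\bigr)$, $\bigl[B,2^\ell\bigr)$, where $A=\dfrac{5\cdot 2^{2\ell-3}}{3\cdot 2^{\ell-2}+1}$ and $B=\dfrac{2^{2\ell}}{2^\ell+3}$. On the first subinterval $\mathfrak K(m_\ell)>k$, so $\gamma(k)\le m_\ell$ and $\gamma(k)/k\le(3\cdot 2^{\ell-2}+1)/2^{\ell-1}=3/2+2^{-(\ell-1)}$. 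On the second, since $B=\dfrac{2^{2(\ell+1)-2}}{2^{(\ell+1)-1}+3}$ we get $\mathfrak K(m_{\ell+1}')>k$, so $\gamma(k)\le m_{\ell+1}'=2^\ell+3$ and (using $k\ge A$) $\gamma(k)/k\to 6/5$. On the third, a one-line dyadic estimate shows $\mathfrak K(m_{\ell+1})>2^\ell>k$, so $\gamma(k)\le m_{\ell+1}=3\cdot 2^{\ell-1}+1$ and (using $k\ge B$) $\gamma(k)/k\to 3/2$. Since each of the three bounds is $3/2+o(1)$ as $\ell\to\infty$, and $\ell\to\infty$ with $k$, the $\limsup$ is at most $3/2$.

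The only genuine obstacle is the bookkeeping in the last paragraph: one must verify, with the exact values of $A$ and $B$ rather than their asymptotics, that $2^{\ell-1}<A<B<2^\ell$ for all large $\ell$ (so that the three subintervals really do partition $[2^{\ell-1},2^\ell)$ with no gap), and that the $\mathfrak K$-inequalities needed in the second and third cases hold throughout those subintervals. Each of these reduces to an elementary inequality between dyadic expressions in which the additive constants $+1$ and $+3$ appearing in $m_\ell$ and $m_\ell'$ must be tracked carefully; once they are checked, the theorem follows by pure substitution into $\gamma(k)\le m$.
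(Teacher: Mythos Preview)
Your proposal is correct and follows essentially the same approach as the paper. Both arguments use Lemma~\ref{Lem9} in the identical way: the families $m_\ell=3\cdot 2^{\ell-2}+1$ and $m_\ell'=2^{\ell-1}+3$ furnish odd integers with large $\mathfrak K$, and the observation $\gamma(k)\le m$ whenever $\mathfrak K(m)>k$ does the rest. For the $\liminf$ your sequence $k_\ell$ is exactly the paper's $f(\ell)$, and the computation is identical. For the $\limsup$ the only cosmetic difference is the partition: the paper covers the positive integers by alternating intervals $(h(\ell),f(\ell)]$ and $(f(\ell),h(\ell+1)]$ (two per index $\ell$, using the chain $h(\ell)<f(\ell)\le h(\ell+1)$), whereas you slice each dyadic block $[2^{\ell-1},2^\ell)$ into three pieces at the thresholds $A$ and $B$; since $A\sim f(\ell)$ and $B\sim h(\ell+1)$, your third piece $[B,2^\ell)$ together with the first piece of the next block is precisely the paper's interval $(h(\ell+1),f(\ell+1)]$, and both are handled with $m_{\ell+1}$. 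The elementary inequalities you flag ($2^{\ell-1}<A<B<2^\ell$ and $\mathfrak K(m_{\ell+1})>2^\ell$) all hold for $\ell\ge 3$ by straightforward dyadic arithmetic, so there is no gap.
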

\begin{proof}
For each positive integer $\ell$, let $f(\ell)=\left\lfloor\dfrac{5\cdot 2^{2\ell-3}}{3\cdot 2^{\ell-2}+1}\right\rfloor$ and $h(\ell)=\left\lfloor\dfrac{2^{2\ell-2}}{2^{\ell-1}+3}\right\rfloor$. One may easily verify that $h(\ell)<f(\ell)\leq h(\ell+1)$ for all $\ell\geq 3$. Lemma \ref{Lem9} informs us that $\mathfrak K(3\cdot 2^{\ell-2}+1)>f(\ell)$. This means that the prefix of ${\bf t}$ of length $(3\cdot 2^{\ell-2}+1)f(\ell)$ is an $f(\ell)$-anti-power, so $\gamma(f(\ell))\leq 3\cdot 2^{\ell-2}+1$. As a consequence, \[\liminf_{k\to\infty}\frac{\gamma(k)}{k}\leq\liminf_{\ell\to\infty}\frac{\gamma(f(\ell))}{f(\ell)}\leq\lim_{\ell\to\infty}\frac{3\cdot 2^{\ell-2}+1}{f(\ell)}=\frac{9}{10}.\] 

Now, choose an arbitrary integer $k\geq 3$.  If $h(\ell)<k\leq f(\ell)$ for some integer $\ell\geq 3$, then the prefix of ${\bf t}$ of length $(3\cdot 2^{\ell-2}+1)f(\ell)$ is an $f(\ell)$-anti-power. This implies that $\gamma(k)\leq 3\cdot 2^{\ell-2}+1$, so \[\frac{\gamma(k)}{k}<\frac{3\cdot 2^{\ell-2}+1}{h(\ell)}.\] Alternatively, we could have $f(\ell)<k\leq h(\ell+1)$ for some $\ell\geq 3$. In this case, Lemma \ref{Lem9} tells us that the prefix of ${\bf t}$ of length $(2^{\ell}+3)h(\ell+1)$ is an $h(\ell+1)$-anti-power. It follows that \[\frac{\gamma(k)}{k}<\frac{2^{\ell}+3}{f(\ell)}\] in this case. 

Combining the above cases, we deduce that \[\limsup_{k\to\infty}\frac{\gamma(k)}{k}\leq\limsup_{\ell\to\infty}\left[\max\left\{\frac{3\cdot 2^{\ell-2}+1}{h(\ell)},\frac{2^{\ell+1}+3}{f(\ell)}\right\}\right]=\max\left\{\frac{3}{2},\frac{6}{5}\right\}=\frac{3}{2}.\] 
\end{proof}
\begin{remark}
Preserve the notation from the proof of Theorem \ref{Thm4}. We showed that \[\frac{\gamma(k)}{k}<\frac{3\cdot 2^{\ell-2}+1}{h(\ell)}=\frac{3}{2}+o(1)\] if $h(\ell)<k\leq f(\ell)$ and \[\frac{\gamma(k)}{k}<\frac{2^{\ell}+3}{f(\ell)}=\frac{6}{5}+o(1)\] whenever $f(\ell)<k\leq h(\ell+1)$ (the $o(1)$ terms refer to asymptotics as $k\to\infty$). This is indeed reflected in the top image of Figure \ref{Fig3}, which portrays a plot of $\gamma(k)/k$ for $3\leq k\leq 2100$. 
\end{remark}

\section{Concluding Remarks}
In Theorems \ref{Thm1} and \ref{Thm2}, we obtained the exact values of $\displaystyle{\liminf_{k\to\infty}(\Gamma(k)/k)}$ and $\displaystyle{\limsup_{k\to\infty}(\Gamma(k)/k)}$. Unfortunately, we were not able to determine the exact values of $\displaystyle{\liminf_{k\to\infty}(\gamma(k)/k)}$ and $\displaystyle{\limsup_{k\to\infty}(\gamma(k)/k)}$. Figure \ref{Fig3} suggests that the upper bounds we obtained are the correct values.
\begin{conjecture}\label{Conj1}
We have \[\liminf_{k\to\infty}\frac{\gamma(k)}{k}=\frac{9}{10}\hspace{.5cm}and\hspace{.5cm}\limsup_{k\to\infty}\frac{\gamma(k)}{k}=\frac{3}{2}.\]
\end{conjecture}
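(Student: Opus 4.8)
The plan is to sharpen Section~4's analysis of $\mathfrak K(m)$ until the upper bounds on $\mathfrak K(m)$ are asymptotically optimal, then transfer this to $\gamma$. Since $\gamma(k)=\min\{m\in 2\mathbb Z^+-1:\mathfrak K(m)>k\}$, one has
\[\liminf_{k\to\infty}\frac{\gamma(k)}{k}\geq\frac{9}{10}\iff\limsup_{\substack{m\to\infty\\ m\text{ odd}}}\frac{\mathfrak K(m)}{m}\leq\frac{10}{9},\]
and $\limsup_{k\to\infty}\gamma(k)/k\geq 3/2$ follows once one knows that for every $\varepsilon>0$, for all large $\ell$, every odd $m<3\cdot 2^{\ell-2}$ satisfies $\mathfrak K(m)\leq(1+\varepsilon)2^{\ell-1}$: Lemma~\ref{Lem9} gives $\mathfrak K(3\cdot 2^{\ell-2}+1)>f(\ell)\sim\tfrac{5}{3}\,2^{\ell-1}$ (here $f(\ell)=\lfloor 5\cdot 2^{2\ell-3}/(3\cdot 2^{\ell-2}+1)\rfloor$, as in the proof of Theorem~\ref{Thm4}), so taking $k=\lceil(1+\varepsilon)2^{\ell-1}\rceil$ forces $\gamma(k)=3\cdot 2^{\ell-2}+1$ and hence $\gamma(k)/k\to\tfrac{3}{2(1+\varepsilon)}$. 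Both halves thus reduce to the single uniform estimate
\[\mathfrak K(m)\ \leq\ \min\!\left\{\tfrac{10}{9}\,m,\ 2\cdot 2^{\delta(m)}\right\}+o\!\left(2^{\delta(m)}\right)\qquad\text{for every odd }m,\]
where $\delta(m)=\lceil\log_2(m/3)\rceil$; here the second term is only a factor $2$ above the trivial lower bound $\mathfrak K(m)\geq 2^{\delta(m)}+1$ of Corollary~\ref{Cor1}. The estimate is sharp: its first term is attained (up to the error term) along $m=3\cdot 2^{\ell-2}+1$ and its second along $m=2^{\ell-1}+3$, both by Lemma~\ref{Lem9}.

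The heart of the matter is the uniform estimate, which I would prove by extending the case analysis of Section~4. The engine is Lemma~\ref{Lem4} combined with the ``scale-shift and slice'' device already used in the proofs of Lemma~\ref{Lem5} (the subcase $m\equiv 29\pmod{32}$), Lemma~\ref{Lem6}, and Lemma~\ref{Lem9}: to bound $\mathfrak K(m)$ from above, one finds a short word $P$ of length $2$ or $3$ occurring at some position $j$ and again at position $j+m$ (or $j+2m$) of ${\bf t}$, applies $\mu^{s}$ for a suitable $s$, and partitions the resulting block $\mu^{s}(P)$ of length $\lvert P\rvert 2^{s}$ into consecutive length-$m$ blocks, forcing $\langle rm+1,(r+1)m\rangle=\langle r'm+1,(r'+1)m\rangle$ for explicit $r,r'$. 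Since occurrences of such short words in ${\bf t}$ are governed by parities of binary digit sums, this is a finite case analysis on the binary expansion of $m$, organized by $m\bmod 8$ and, for $m\equiv 1\pmod 8$, by the exponent $L$ with $m\equiv 1\pmod{2^{L}}$, $m\not\equiv 1\pmod{2^{L+1}}$. The new ingredient, relative to Section~4, is that in each case $P$ and $s$ must be chosen so that the resulting collision distance respects \emph{both} terms of the minimum at once; when $m$ lies in the lower half of its dyadic interval (so that $2^{\lceil\log_2 m\rceil}\approx 2m$), the ``scale $2^{\lceil\log_2 m\rceil}$'' constructions of Section~4 are too wasteful, and one must work at scale $2^{\delta(m)}$ or lower and place the repeated block near the \emph{middle} of $\mu^{s}(P)$ rather than at its front, as in the $m\equiv 29\pmod{32}$ subcase of Lemma~\ref{Lem5} and in Lemma~\ref{Lem9}.

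Granting the uniform estimate, the two statements follow quickly. For the $\liminf$, the estimate gives $\limsup_{m}\mathfrak K(m)/m\leq 10/9$, hence $\liminf_{k}\gamma(k)/k\geq 9/10$; with Theorem~\ref{Thm4} this is an equality. For the $\limsup$, an odd $m<3\cdot 2^{\ell-2}$ has $\delta(m)\leq\ell-2$, so the second term of the minimum gives $\mathfrak K(m)\leq 2\cdot 2^{\ell-2}+o(2^{\ell-2})=(1+o(1))2^{\ell-1}$ uniformly; feeding this into the reduction above (with Lemma~\ref{Lem9} for $\mathfrak K(3\cdot 2^{\ell-2}+1)$) gives $\limsup_{k}\gamma(k)/k\geq 3/2$, and Theorem~\ref{Thm4} again makes it an equality.

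I expect the main obstacle to be pinning the constants down \emph{exactly} and \emph{uniformly} over all odd $m$. The lemmas of Section~4 prove $\mathfrak K(m)=O\!\big(2^{\lceil\log_2 m\rceil}\big)$ with slack that is harmless for Theorems~\ref{Thm3} and \ref{Thm4} but fatal here: e.g.\ the estimate $(1+37/m)2^{\lceil\log_2 m\rceil}$ of Lemma~\ref{Lem5} already permits $\mathfrak K(m)/m\approx 4/3$ for $m$ just above a number of the form $3\cdot 2^{j}$, whereas we need $\leq 10/9$. The delicate cases are the ``resonant'' ones --- $m$ congruent to $1$ modulo a large power of $2$, and $m$ close to $3\cdot 2^{j}$ --- where the short repeated words available in ${\bf t}$ occur only relatively far out; there one must either exhibit a closer occurrence or exploit a coarser self-similarity of ${\bf t}$ to compensate (the $\mu^{2\ell-3}$ argument in the proof of Lemma~\ref{Lem9} being the prototype). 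Checking that \emph{some} admissible choice of $(P,s)$ always meets both terms of the minimum --- not merely one of them --- is where an idea going beyond a routine refinement of Section~4 seems to be needed.
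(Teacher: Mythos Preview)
The statement you are attempting is Conjecture~\ref{Conj1}, which the paper explicitly leaves \emph{open}: there is no proof in the paper to compare against. The paper only establishes the upper bounds (Theorem~\ref{Thm4}) and weaker lower bounds (Theorem~\ref{Thm3}), and remarks that a proof of the conjecture ``will most likely require a stronger upper bound for $\mathfrak K(m)$.''

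Your reductions are correct. The equivalence
\[
\liminf_{k\to\infty}\frac{\gamma(k)}{k}\geq\frac{9}{10}\ \Longleftrightarrow\ \limsup_{\substack{m\to\infty\\ m\text{ odd}}}\frac{\mathfrak K(m)}{m}\leq\frac{10}{9}
\]
holds in both directions, and your argument for $\limsup_{k}\gamma(k)/k\geq 3/2$ via a uniform bound $\mathfrak K(m)\leq 2\cdot 2^{\delta(m)}+o(2^{\delta(m)})$ for odd $m<3\cdot 2^{\ell-2}$, combined with Lemma~\ref{Lem9}, is sound. You have correctly isolated the target: the uniform estimate
\[
\mathfrak K(m)\leq\min\!\left\{\tfrac{10}{9}m,\ 2\cdot 2^{\delta(m)}\right\}+o\!\left(2^{\delta(m)}\right).
\]

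The gap is that you do not prove this estimate --- and you say so yourself (``which I would prove,'' ``I expect the main obstacle''). What you give is a description of the \emph{shape} a proof might take: extend the case analysis of Lemmas~\ref{Lem4}--\ref{Lem8}, work at scale $2^{\delta(m)}$ rather than $2^{\lceil\log_2 m\rceil}$ when $m$ is in the lower half of its dyadic interval, and find short repeated factors $P$ with carefully placed positions. But this is exactly the step the paper could not carry out. The ``resonant'' cases you flag (odd $m$ just above $3\cdot 2^{j}$, and $m\equiv 1\pmod{2^{L}}$ with $L$ large) are genuine obstructions: for $m$ slightly above $3\cdot 2^{j}$ one has $2^{\lceil\log_2 m\rceil}\approx \tfrac{4}{3}m$, and none of the existing lemmas bring $\mathfrak K(m)$ below $\tfrac{10}{9}m$ there. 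You assert that ``some admissible choice of $(P,s)$ always meets both terms of the minimum,'' but give no construction, and in your final paragraph concede that this ``is where an idea going beyond a routine refinement of Section~4 seems to be needed.'' That concession is accurate: without that idea, what you have written is a restatement of why the conjecture is plausible and what a proof would need to accomplish, not a proof.
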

Recall that we obtained lower bounds for $\displaystyle{\liminf_{k\to\infty}(\gamma(k)/k)}$ and $\displaystyle{\limsup_{k\to\infty}(\gamma(k)/k)}$ by first showing that $\mathfrak K(m)\leq 2^{\lceil\log_2 m\rceil}(1+o(m))$. If Conjecture \ref{Conj1} is true, its proof will most likely require a stronger upper bound for $\mathfrak K(m)$. 

We know from Theorem \ref{Thm1} that $(2\mathbb Z^+-1)\setminus\mathcal F(k)$ is finite whenever $k\geq 3$. A very natural problem that we have not attempted to investigate is that of determining the cardinality of this finite set. Similarly, one might wish to explore the sequence $(\Gamma(k)-\gamma(k))_{k\geq 3}$. 

Recall that if $w$ is an infinite word whose $i^\text{th}$ letter is $w_i$, then $AP(w,k)$ is the set of all positive integers $m$ such that $w_1w_2\cdots w_{km}$ is a $k$-anti-power. An obvious generalization would be to define $AP_j(w,k)$ to be the set of all positive integers $m$ such that $w_{j+1}w_{j+2}\cdots w_{j+km}$ is a $k$-anti-power. Of course, we would be particularly interested in analyzing the sets $AP_j({\bf t},k)$. 

Define a $(k,\lambda)$-anti-power to be a word of the form $w_1w_2\cdots w_k$, where $w_1,w_2,\ldots,w_k$ are words of the same length and $\vert\{i\in\{1,2,\ldots,k\}\colon w_i=w_j\}\vert\leq\lambda$ for each fixed $j\in\{1,2,\ldots,k\}$. With this definition, a $(k,1)$-anti-power is simply a $k$-anti-power. Let $\mathfrak K_\lambda(m)$ be the smallest positive integer $k$ such that the prefix of ${\bf t}$ of length $km$ is not a $(k,\lambda)$-anti-power. What can we say about $\mathfrak K_\lambda(m)$ for various positive integers $\lambda$ and $m$?   

Finally, note that we may ask questions similar to the ones asked here for other infinite words. In particular, it would be interesting to know other nontrivial examples of infinite words $x$ such that $\min AP(x,k)$ grows linearly in $k$.   

\begin{figure}[t]
\begin{center}
\includegraphics[width=\linewidth]{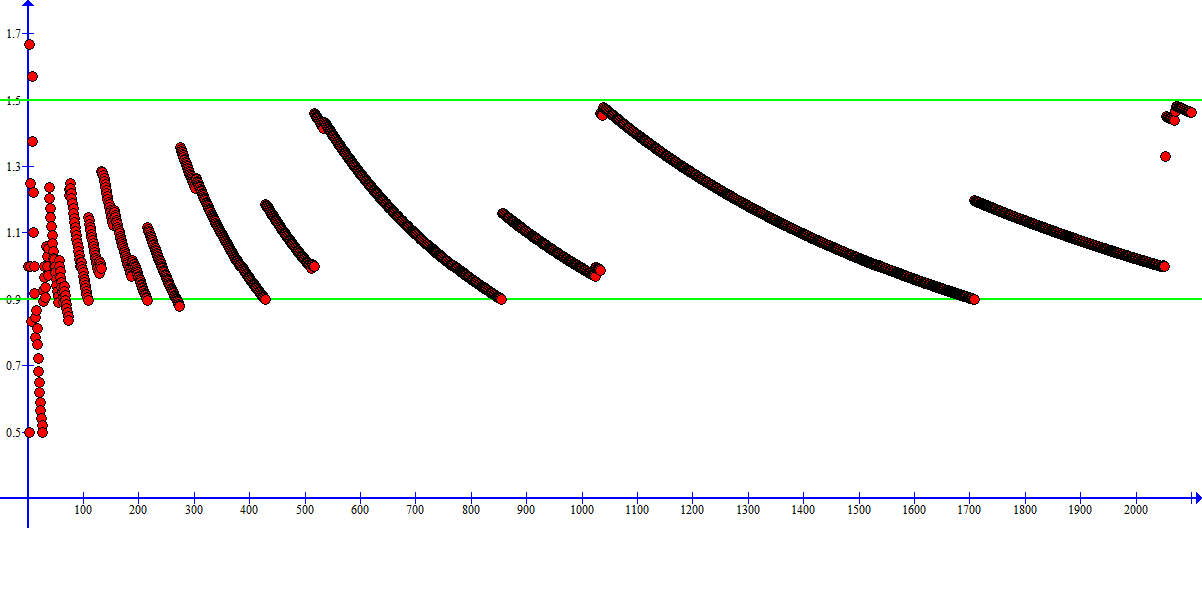}
\includegraphics[width=\linewidth]{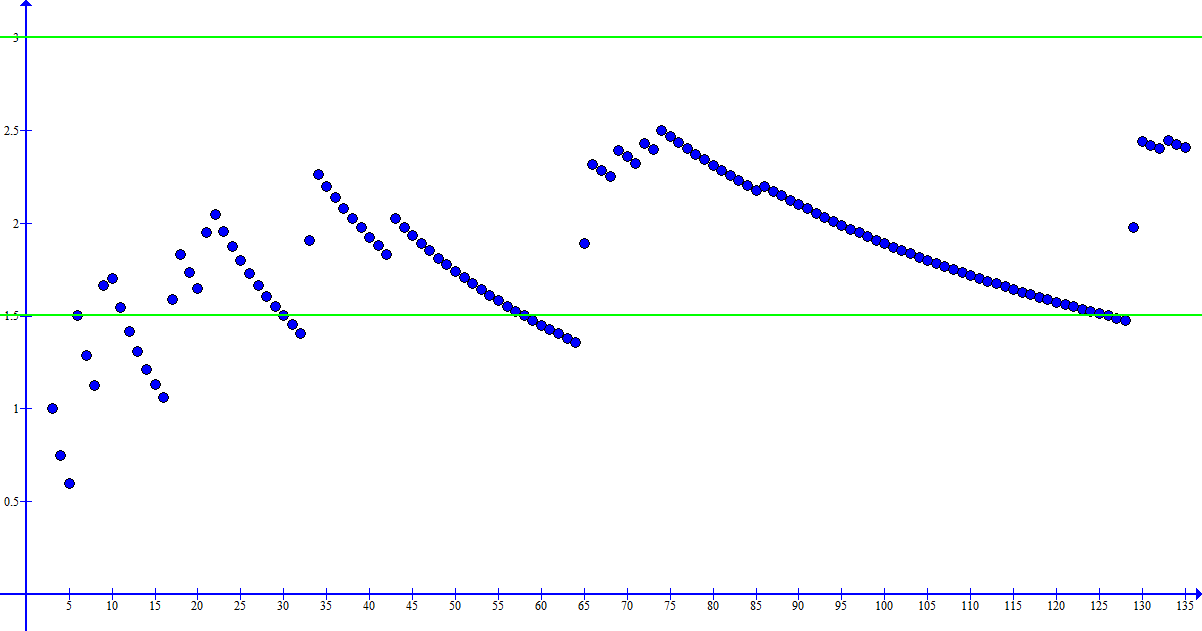}
\end{center}
\captionof{figure}{Plots of $\gamma(k)/k$ for $3\leq k\leq 2100$ (top) and $\Gamma(k)/k$ for $3\leq k\leq 135$ (bottom). In the top image, the green lines are at $y=9/10$ and $y=3/2$. In the bottom image, the green lines are at $y=3/2$ and $y=3$.} \label{Fig3}
\end{figure}

\begin{figure}[t]
\begin{center}
\includegraphics[width=\linewidth]{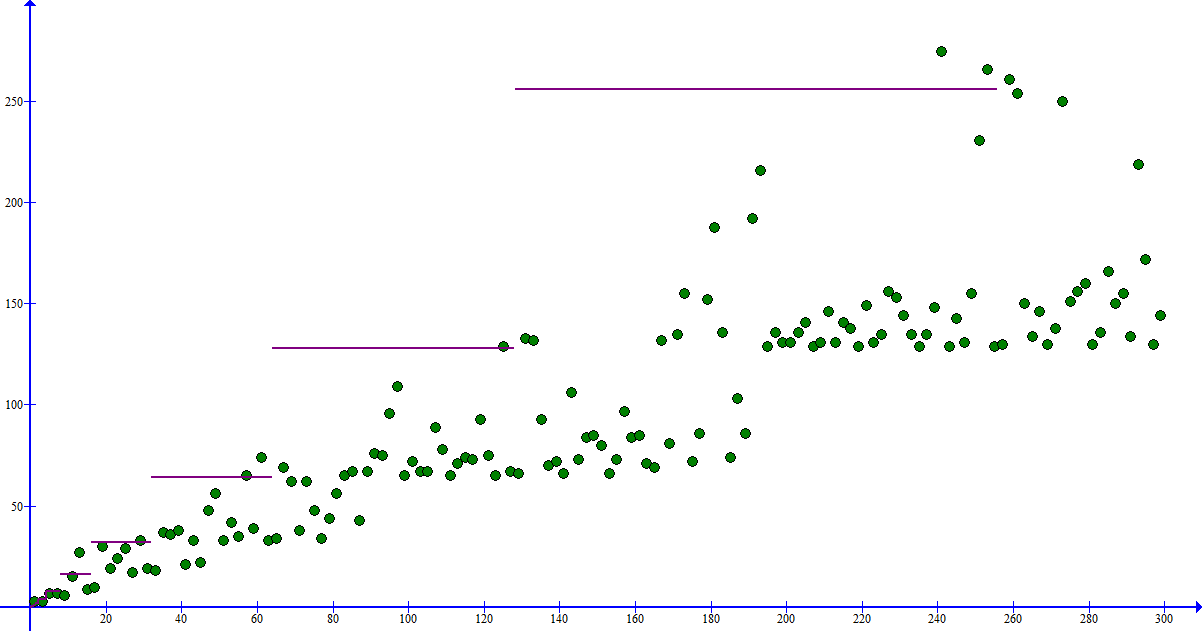}
\end{center}
\captionof{figure}{A plot of $\mathfrak K(m)$ for all odd positive integers $m\leq 299$. In purple is the graph of $y=2^{\lceil\log_2 x\rceil}$.} \label{Fig4}
\end{figure}

\section{Acknowledgments}
This work was supported by the grants NSF-1358659 and NSA H98230-16-1-0026. 

The author would like to thank Jacob Ford for writing a program that produced the values of $\gamma(k)$ for $3\leq k\leq 2100$. The author would also like to thank Joe Gallian and Sam Elder for reading thoroughly through this manuscript and providing helpful feedback.

\end{document}